\documentclass[12pt,reqno]{amsart}
\usepackage[left=3cm, right=3cm, top=2.8cm, bottom=2.8cm]{geometry}
\usepackage{amsmath,amssymb,amsfonts,algorithm2e}
\usepackage[utf8]{inputenc}
\usepackage[T1]{fontenc}
\usepackage[english]{babel}
\usepackage{graphicx}

\newtheorem*{remark*}{Remark}
\newtheorem{theorem}{Theorem}
\newtheorem{lemma}[theorem]{Lemma}

\newtheorem{corollary}[theorem]{Corollary}
\newtheorem{definition}[theorem]{Definition}

\newtheorem{example}[theorem]{Example}

\begin{document}

\title[A Polylogarithm Solution to the Epsilon--Delta Problem]{A Polylogarithm Solution to the Epsilon--Delta Problem}

\author[P. M. Carvalho-Neto]{Paulo M. de Carvalho-Neto}
\address[Paulo M. de Carvalho-Neto]{Departamento de Matem\'atica, Centro de Ciências Físicas e Matemáticas, Universidade Federal de Santa Catarina, Florian\'{o}polis - SC, Brazil}
\email[P. M. Carvalho-Neto]{paulo.carvalho@ufsc.br}
\author[P. A. Liboni Filho]{Paulo A. Liboni Filho}
\address[Paulo A. Liboni Filho]{Departamento de Matem\'{a}tica, Centro de Ci\^{e}ncias Exatas, Universidade Estadual de Londrina, 86057-970, Londrina, PR, Brazil}
\email[P. A. Liboni Filho]{liboni@uel.br }

\subjclass[2010]{ 26A15, 26B05, 65J99, 26E40, 68W25 }
\keywords{continuity,  epsilon--delta, computability, regularity, approximation}

\begin{abstract}  Let $f$ be a continuous real function defined in a subset of the real line. The standard definition of continuity at a point $x$ allow us to correlate any given epsilon with a (possibly depending of $x$) delta value. This pairing is known as the epsilon--delta relation of $f$. In this work, we demonstrate the existence of a privileged choice of delta in the sense that it is continuous, invertible, maximal and it is the solution of a simple functional equation. We also introduce an algorithm that can be used to numerically calculate this map in polylogarithm time, proving the computability of the epsilon--delta relation. Finally, some examples are analyzed in order to showcase the accuracy and effectiveness of these methods, even when the explicit formula for the aforementioned privileged function is unknown due to the lack of analytical tools for solving the functional equation.
\end{abstract}

\maketitle

\section{Introduction}

It was L. Kronecker who first coined the expression ``Arithmetization of Analysis'', which eventually became the standard name to designate a group of important research activities carried out during the second half of the 19th century. The program, which is commonly considered completed by 1872, lead to core results in the foundations of mathematics, such as the construction of the real numbers and the definition of limit (cf. \cite{BoMe1,Gr1,Le1}).

The arithmetization marks a paradigm shift in mathematical proofs, with the abandonment of geometric intuition as criteria of truth in favor of a more logical and theoretical reasoning. Eventually, this new framework made all the modern definitions and theorems possible, giving birth to a new aeon in analysis and mathematics.

In the preceding context, B. Bolzano and A. L. Cauchy are considered to be the first to formally discuss the abstract concept of continuity using the $\varepsilon$-$\delta$ definition between the years of 1817 and 1823 (cf. \cite{BoKa1,Fe1}). This formulation allowed the community to address the continuity conceptualization in more abstract spaces, like metric ones (cf. \cite{Di1}). For the sake of clarity, permit us to briefly recall it. Let $(M_1,d_{M_1})$ and $(M_2,d_{M_2})$ be metric spaces. Given a function $f:M_1\rightarrow M_2$ and a point $x\in M_1$, we say that $f$ is \emph{continuous} at $x$ if for any $\varepsilon>0$ there exists $\delta>0$ such that \[ y\in M_1\textrm{ and }d_{M_1}(x,y)<\delta\Longrightarrow d_{M_2}(f(x),f(y))<\varepsilon. \]

Conceptually, the previous definition is prescribed by an implication. Such formalization, even been well known by the entire community, settles an obstacle to directly verifying if a given function is continuous at a fixed point $x\in M_1$. Therefore, instead of using the definition itself, it is usual to apply theorems about continuity to address this matter --- like those who ensures that this property is preserved by products, compositions and linear operations.

The main drawback of using such results is not knowing at least one of the possible $\delta=\delta(x,\varepsilon)$ for the continuous function in question. As pointed in the literature (cf. \cite{CaLi1}), explicitly knowing it can be useful specially when the function is not differentiable and you want to use an inequality like the one provided by the Mean Value Theorem. It is noteworthy that presenting such inequalities for a variety of spaces and functions is an endless endeavor in analysis; see for instance \cite{An1,ClLe1,ClLe2,FeWaHaXiTu1,So2,So1,Tu1} and references therein.

This outcome marks the starting point of this work. More specifically, we focus in the discussion of the $\delta=\delta(x,\varepsilon)$ relation of a continuous function by presenting some new results that implies the existence of a continuous choice of $\delta(x,\varepsilon)$ which is invertible, maximal and can be evaluated by a simple and computable functional equation.

These results are then used to answer the open question about the possibility to numerically determinate the $\varepsilon$--$\delta$ relation of a continuous function within a prescribed precision. We also present and discuss a sample algorithm that uses our computable equation to solve this issue in polylogarithm time.

Bearing last observations in mind, we now present the structure of this paper. Section \ref{sec:theory} introduces the formalism to proof the existence of such privileged choice of $\delta$. It also recalls the conditions on $f$ that allow us to guarantee the existence of a maximum $\delta$, indicated by $\Pi_x^f(\varepsilon)$, that suits the continuity definition of $f$.

By letting $(x, \varepsilon)$ vary in a suitable $\Omega \subset M_1 \times \mathbb{R}^+$, we prove that the relation $(x, \varepsilon) \mapsto \Pi_x^f(\varepsilon)$ defines a continuous, invertible and maximal operator, which also satisfies a computable equation. This gives rise to a well behaved map called the \emph{continuity function} for $f$. Moreover, its graph is a manifold that is called the \emph{$\varepsilon$--$\delta$ manifold} for $f$.

Our second goal, which is discussed in Section \ref{sec:algorithm}, is to use the theorems we derived in the previous section to address the computability of the continuity function and provide polylogarithm pseudocode for a sample implementation. Besides that, we also address hypotheses for numerical computation within a prescribed precision.

Finally, Section \ref{sec:conclusions} presents the continuity function and the $\varepsilon$--$\delta$ manifold for three mappings of the following classes: exponential, rational and affine functions --- all of these found both analytically and numerically, in the fashion we established before. After that, we present an example where the continuity function is not explicitly known.

\section{Theoretical Foundation}
\label{sec:theory}

This section is devoted to discuss the definitions, notations and results that are used throughout this work. Therewith, assume that $(M_1, d_{M_1})$ and $(M_2, d_{M_2})$ denote metric spaces. We also convene that the open ball in $M_i$, with center $x\in M_i$ and radius $r>0$, is denoted by  $B_{M_i}(x,r)$. Besides that, for any subsets $A\subset M_1$ and $B\subset M_2$, it is assumed that the notation $\mathcal{F}(A, B)$ refers to the collection of all functions $f:A\rightarrow B$. If there is no risk of confusion, we simply write $\mathcal{F}$ to denote such entity.

Let us begin this section briefly recalling some preliminary tools and ideas that were addressed and proved in \cite{CaLi1}.

\begin{definition}
An element $(f, x, \varepsilon)\in\mathcal{F}(M_1, M_2) \times M_1 \times \left( \mathbb{R}^+\setminus\{0\}\right)$ is called a \emph{triplet} associated to $M_1$ and $M_2$. When there is no risk of confusion, we simply say that $(f, x, \varepsilon)$ is a \emph{triplet}.
\end{definition}

\begin{definition}
A positive real number $\delta$ is said to be \emph{suitable} for a given triplet $(f,x,\varepsilon)$ if
\[
    f\left(B_{M_1}(x,\delta)\right) \subset B_{M_2}(f(x), \varepsilon).
\] Furthermore, the set of all suitable positive real numbers for that triplet is denoted by $\Delta_{f,x}(\varepsilon)$.
\end{definition}

Note that we are not making any assumptions on $f$ at this point. Nevertheless, it is important to note that the previous definition renders the idea that a certain number $\delta$ fits the continuity definition for a function $f$ at a fixed point $x$, for a particular choice of $\varepsilon$.

\begin{definition}
Given $x\in M_1$ and a function $f:M_1\rightarrow M_2$, define
\[
    E_f(x) =\{\varepsilon>0:\Delta_{f,x}(\varepsilon)\textrm{ is a non empty, bounded set}\}.
\]
\end{definition}

It is not difficult to see that $\Delta_{f,x}(\varepsilon)$ and $E_f(x)$ are intimately connected to the continuity properties of $f$. For instance, $f$ is continuous at $x$ if and only if $\Delta_{f,x}(\varepsilon) \not = \emptyset$ for any positive value $\varepsilon$. Moreover, a necessary and sufficient condition for $f$ to be uniformly continuous is that $\cap_{x \in M_1} \Delta_{f,x}(\varepsilon)\neq\emptyset$ for any positive value $\varepsilon$. The following example gives one possible scenario for the aforesaid sets.

\begin{example}
\label{ex:log}
Consider $M_1=\mathbb{R}^+$, $M_2=\mathbb{R}$ and $\{d_i\}_{i=1,2}$ the real Euclidean metrics. If $f:M_1\rightarrow M_2$ is the natural logarithm function, then for any $\varepsilon>0$ and $x\in M_1$ we obtain that $\Delta_{f,x}(\varepsilon)=(0, x-xe^{-\varepsilon}]$. This allow us to conclude that $E_{f}(x)=(0,\infty)$ for any $x\in M_1$. On the other hand, since $\cap_{x \in M_1} \Delta_{f,x}(\varepsilon)=\emptyset$ we conclude that $f$ is not uniformly continuous.
\end{example}

From the above considerations, we point a couple of interesting results that better describe all the possible topological configurations for the sets discussed above. The proofs of these theorems can be found in \cite{CaLi1}.

\begin{theorem}
\label{thm:topology}
If $(f,x,\varepsilon)$ is a triplet, then one, and only one, of the following alternatives occurs.
\begin{itemize}
    \item[(i)] $\Delta_{f,x}(\varepsilon)=\emptyset$;
    \item[(ii)] $\Delta_{f,x}(\varepsilon)=(0,\infty)$;
    \item[(iii)] There is a certain $\delta>0$ such that $\Delta_{f,x}(\varepsilon)=(0,\delta]$.
\end{itemize}
\end{theorem}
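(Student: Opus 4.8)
The plan is to exploit two elementary features of $\Delta_{f,x}(\varepsilon)$: it is a downward closed subset of $(0,\infty)$, and whenever its supremum is finite that supremum is attained. These two observations together pin the set down to exactly one of the three listed shapes, and no hypothesis on $f$ is needed.

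First I would record the monotonicity of open balls: if $0<\delta'\le\delta$ then $B_{M_1}(x,\delta')\subseteq B_{M_1}(x,\delta)$, so $f\big(B_{M_1}(x,\delta')\big)\subseteq f\big(B_{M_1}(x,\delta)\big)$. Hence, if $\delta\in\Delta_{f,x}(\varepsilon)$ then every $\delta'\in(0,\delta]$ also belongs to $\Delta_{f,x}(\varepsilon)$; that is, $\Delta_{f,x}(\varepsilon)$ is an initial segment of $(0,\infty)$. If it is empty we are in case (i) and there is nothing more to prove, so assume it is nonempty and put $s:=\sup\Delta_{f,x}(\varepsilon)\in(0,\infty]$.

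The only step that requires a genuine argument is the following claim: for every finite $\delta_0$ with $0<\delta_0\le s$ one has $\delta_0\in\Delta_{f,x}(\varepsilon)$. To see this, take $y\in B_{M_1}(x,\delta_0)$; then $d_{M_1}(x,y)<\delta_0\le s=\sup\Delta_{f,x}(\varepsilon)$, so there exists $\delta''\in\Delta_{f,x}(\varepsilon)$ with $\delta''>d_{M_1}(x,y)$, whence $y\in B_{M_1}(x,\delta'')$ and therefore $f(y)\in f\big(B_{M_1}(x,\delta'')\big)\subseteq B_{M_2}(f(x),\varepsilon)$. Since $y$ was arbitrary, $\delta_0$ is suitable for $(f,x,\varepsilon)$. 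Now, if $s=\infty$ the claim gives $\delta_0\in\Delta_{f,x}(\varepsilon)$ for every $\delta_0>0$, so $\Delta_{f,x}(\varepsilon)=(0,\infty)$, which is case (ii); if $s<\infty$, applying the claim with $\delta_0=s$ shows $s\in\Delta_{f,x}(\varepsilon)$, and combining this with the initial-segment property and with the inclusion $\Delta_{f,x}(\varepsilon)\subseteq(0,s]$ (immediate from $s=\sup$) gives $\Delta_{f,x}(\varepsilon)=(0,s]$, which is case (iii).

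It remains to observe that the three alternatives are mutually exclusive: $\emptyset$, the unbounded interval $(0,\infty)$, and a bounded half-open interval $(0,\delta]$ are pairwise distinct, and $(0,\delta_1]=(0,\delta_2]$ forces $\delta_1=\delta_2$, so the value $\delta$ appearing in case (iii) is uniquely determined. I expect the sup-attainment claim to be the crux of the proof; its entire content is that the strict inequality $d_{M_1}(x,y)<\delta_0$ in the definition of the open ball leaves exactly the room needed to slip a suitable radius $\delta''$ between $d_{M_1}(x,y)$ and the supremum, and this is what prevents the half-open interval in (iii) from being open at its right endpoint.
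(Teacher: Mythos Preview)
Your argument is correct: the downward-closure of $\Delta_{f,x}(\varepsilon)$ together with the observation that the supremum, when finite, is itself suitable (because the strict inequality in the open ball lets you slip a suitable $\delta''$ between $d_{M_1}(x,y)$ and $s$) gives exactly the trichotomy, and the mutual exclusivity is immediate. The paper does not actually include a proof of this statement here; it simply records the result and refers the reader to \cite{CaLi1} for the argument, so there is nothing to compare your approach against in this manuscript. Your proof is the natural one and would be what any reader reconstructing the argument for themselves would write.
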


\begin{theorem} Assume that $(f,x,\varepsilon)$ is a triplet such that $f:M_1\rightarrow M_2$ is continuous at $x \in M_1$. Then one, and only one, of the following alternatives occurs.
\begin{itemize}
    \item[(i)] If $f$ is an unbounded function, then $E_{f}(x)=(0,\infty)$;
    \item[(ii)] If $f$ is a constant function, then $E_{f}(x)=\emptyset$;
    \item[(iii)] There is a real number $\varepsilon>0$ such that $E_f(x)=(0,\varepsilon]$ or $E_f(x)=(0,\varepsilon)$.
\end{itemize}
\end{theorem}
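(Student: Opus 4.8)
The plan is to reduce the whole statement to the single scalar quantity
\[
  R \;=\; \sup_{y\in M_1} d_{M_2}\bigl(f(x),f(y)\bigr)\;\in\;[0,\infty],
\]
which measures how far the values of $f$ spread out from $f(x)$, and then to read each of the three alternatives directly off the value of $R$. First I would note that, since $f$ is continuous at $x$, Theorem \ref{thm:topology} leaves only the options $\Delta_{f,x}(\varepsilon)=(0,\infty)$ or $\Delta_{f,x}(\varepsilon)=(0,\delta]$ for some $\delta>0$; in particular $\Delta_{f,x}(\varepsilon)$ is always nonempty, and it is bounded if and only if it is not all of $(0,\infty)$. Hence $\varepsilon\in E_f(x)$ precisely when some positive $\delta$ fails to be suitable for $(f,x,\varepsilon)$.

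The next step is to translate ``every positive $\delta$ is suitable'' into a condition on $R$. Since $\bigcup_{\delta>0}B_{M_1}(x,\delta)=M_1$ and a set--image commutes with unions, we have $f\bigl(B_{M_1}(x,\delta)\bigr)\subset B_{M_2}(f(x),\varepsilon)$ for all $\delta>0$ if and only if $f(M_1)\subset B_{M_2}(f(x),\varepsilon)$, i.e.\ if and only if $d_{M_2}(f(x),f(y))<\varepsilon$ for every $y\in M_1$. Negating, $\varepsilon\in E_f(x)$ if and only if there is some $y\in M_1$ with $d_{M_2}(f(x),f(y))\geq\varepsilon$. Unwinding the definition of supremum, this yields at once: $E_f(x)=(0,\infty)$ when $R=\infty$; and, when $R<\infty$, $E_f(x)=(0,R)$ if the supremum defining $R$ is not attained and $E_f(x)=(0,R]$ if it is attained (both reducing to $\emptyset$ when $R=0$).

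It remains to line up the value of $R$ with the hypotheses on $f$. I would check that $R=\infty$ exactly when $f(M_1)$ is unbounded --- by the triangle inequality, boundedness of the image is equivalent to boundedness of the set $\{d_{M_2}(f(x),f(y)):y\in M_1\}$ --- which is alternative (i); that $R=0$ exactly when $f(y)=f(x)$ for all $y$, i.e.\ $f$ is constant, which is alternative (ii) since then $E_f(x)=\emptyset$; and that in every remaining case $f$ is bounded and nonconstant, so $0<R<\infty$ and the previous paragraph gives alternative (iii) with $\varepsilon=R$. Since ``unbounded'', ``constant'' and ``bounded nonconstant'' partition $\mathcal{F}(M_1,M_2)$, exactly one alternative occurs.

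Most of this is routine bookkeeping; the one place that calls for care is the endpoint $\varepsilon=R$ in the bounded nonconstant case, where the choice between the half-open interval $(0,R]$ and the open interval $(0,R)$ depends entirely on whether the supremum $R$ is actually achieved by some value $f(y)$ --- this is the sole origin of the two sub-cases in (iii). A secondary, formal, point to get right is the reduction ``$\varepsilon\in E_f(x)\iff\Delta_{f,x}(\varepsilon)\neq(0,\infty)$'', which is exactly where the trichotomy of Theorem \ref{thm:topology}, together with continuity of $f$ at $x$ to discard the empty case, is invoked.
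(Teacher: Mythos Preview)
Your argument is correct. Reducing the question to the single quantity $R=\sup_{y\in M_1} d_{M_2}(f(x),f(y))$ is the natural move, and the chain of equivalences
\[
  \varepsilon\in E_f(x)
  \iff \Delta_{f,x}(\varepsilon)\neq(0,\infty)
  \iff \exists\,y\in M_1:\ d_{M_2}(f(x),f(y))\ge\varepsilon
\]
is exactly right, with the first equivalence resting (as you say) on Theorem~\ref{thm:topology} together with continuity at $x$ to rule out the empty alternative. Your case split on $R\in\{0,\infty\}\cup(0,\infty)$ and the handling of the attained/not-attained endpoint are both clean and complete, and the identification of $R=\infty$ with unboundedness of $f(M_1)$ via the triangle inequality is fine.

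As to comparison: the present paper does not supply its own proof of this theorem --- it quotes the statement and defers the argument to the earlier work \cite{CaLi1}. So there is no in-paper proof to compare your route against; what you have written would serve perfectly well as a self-contained proof here.
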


We emphasize that if $f:M_1\rightarrow M_2$ is a continuous and non-constant function, then $E_f(x)$ is a non-empty set for all $x$. Also remember that if $\varepsilon \in E_f(x)$, then $\Delta_{f,x}(\varepsilon)$ is a non-empty and bounded set. Therefore, Theorem \ref{thm:topology} implies that $\Delta_{f,x}(\varepsilon)$ has a maximum value. This fact is essential in the next discussion.

Now that all the main formal requirements are already presented, let us address the conceptualization of the continuity function associated to $f$.

\begin{definition} Given a function $f:M_1\rightarrow M_2$ and a point $x\in M_1$ such that $E_{f}(x) \not = \emptyset$, define the \emph{continuity function} $\Pi^f_x:  E_{f}(x) \rightarrow  (0,\infty)$ by
\[
    \Pi^f_x(\varepsilon)= \max \Delta_{f,x}(\varepsilon).
\]
\end{definition}

For an example of continuity function, recall Example \ref{ex:log}. In this case, observe that $\Pi_{x}^f(\varepsilon)=x(1-e^{-\varepsilon})$. Nevertheless, it is important to remark that finding $\Pi_{x}^f$ for a given function $f$ is a challenging task in most of the cases.

The following theorem recalls sufficient conditions to ensure that the continuity function is at least continuous.

\begin{theorem}[cf. \cite{CaLi1}]
\label{thm:continuity}
Consider $f:M_1\rightarrow M_2$ a non-constant, continuous function. Choose $x\in M_1$ and suppose that for any $r>0$ the closure of $B_{M_1}(x,r)$ is a compact set in $M_1$. Under these conditions, $\Pi^f_x$ is a continuous function.
\end{theorem}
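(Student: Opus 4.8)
The plan is to derive continuity of $\Pi^f_x$ from its monotonicity, sharpened by the compactness hypothesis. First I would record that $\Pi^f_x$ is non-decreasing on $E_f(x)$: if $\varepsilon_1<\varepsilon_2$ then $B_{M_2}(f(x),\varepsilon_1)\subset B_{M_2}(f(x),\varepsilon_2)$, so $\Delta_{f,x}(\varepsilon_1)\subseteq\Delta_{f,x}(\varepsilon_2)$, and passing to maxima (which exist, since $\varepsilon\in E_f(x)$ places us in alternative (iii) of Theorem \ref{thm:topology}) gives $\Pi^f_x(\varepsilon_1)\le\Pi^f_x(\varepsilon_2)$. As $E_f(x)$ is an interval, a non-decreasing function on it has one-sided limits everywhere and can fail to be continuous only through a jump, so it suffices to prove, for each $\varepsilon_0\in E_f(x)$, that the left and right limits of $\Pi^f_x$ at $\varepsilon_0$ both equal $\Pi^f_x(\varepsilon_0)$. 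I would carry this out through the continuous function $g:M_1\to[0,\infty)$, $g(y)=d_{M_2}(f(x),f(y))$, and the non-decreasing function $G(\delta)=\sup_{y\in B_{M_1}(x,\delta)}g(y)$, which by the compactness hypothesis is finite and attained on $\overline{B_{M_1}(x,\delta)}$. Unwinding the definitions, $\delta\in\Delta_{f,x}(\varepsilon)$ exactly when $g<\varepsilon$ on $B_{M_1}(x,\delta)$, and a short argument using $B_{M_1}(x,\delta)=\bigcup_{0<\delta'<\delta}\overline{B_{M_1}(x,\delta')}$ together with the attainment of $G$ yields the clean formula $\Pi^f_x(\varepsilon)=\sup\{\delta>0:G(\delta)<\varepsilon\}$; thus $\Pi^f_x$ is essentially the inverse of $G$, and the task reduces to showing that $G$ is continuous and has no flat stretches on the relevant range.

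For the left-hand limit, fix $\varepsilon_0\in E_f(x)$ and set $\delta_0=\Pi^f_x(\varepsilon_0)$. Given any $\delta<\delta_0$, the compact set $\overline{B_{M_1}(x,\delta)}$ is contained in $B_{M_1}(x,\delta_0)$, where $g<\varepsilon_0$; hence $G(\delta)=\max_{\overline{B_{M_1}(x,\delta)}}g<\varepsilon_0$, so $\delta\in\Delta_{f,x}(\varepsilon')$ and therefore $\Pi^f_x(\varepsilon')\ge\delta$ for every $\varepsilon'\in(G(\delta),\varepsilon_0)$. Letting $\varepsilon'\uparrow\varepsilon_0$ and then $\delta\uparrow\delta_0$ gives $\lim_{\varepsilon\uparrow\varepsilon_0}\Pi^f_x(\varepsilon)\ge\delta_0$, hence equality. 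This is the soft half of the argument: compactness enters only to upgrade ``$g<\varepsilon_0$ pointwise on a ball'' to ``$g\le G(\delta)<\varepsilon_0$ uniformly on a slightly smaller closed ball''.

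The right-hand limit is the crux. Write $L=\lim_{\varepsilon\downarrow\varepsilon_0}\Pi^f_x(\varepsilon)\ge\delta_0$; if $\varepsilon_0$ is the right endpoint of $E_f(x)$ there is nothing to prove, so assume otherwise (which also forces $L<\infty$) and suppose, for contradiction, that $L>\delta_0$. For every $\varepsilon>\varepsilon_0$ close enough to $\varepsilon_0$ one has $\Pi^f_x(\varepsilon)\ge L$, hence $L\in\Delta_{f,x}(\varepsilon)$ and $g<\varepsilon$ on $B_{M_1}(x,L)$; letting $\varepsilon\downarrow\varepsilon_0$ gives $g\le\varepsilon_0$ on $B_{M_1}(x,L)$. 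On the other hand, for each $\delta_1\in(\delta_0,L)$ the maximality of $\delta_0$ in $\Delta_{f,x}(\varepsilon_0)$ forces some $y_1\in B_{M_1}(x,\delta_1)\subset B_{M_1}(x,L)$ with $g(y_1)\ge\varepsilon_0$, whence $g(y_1)=\varepsilon_0$; letting $\delta_1\downarrow\delta_0$ and invoking compactness of a fixed closed ball containing all the $y_1$ produces a point $y_\infty$ with $d_{M_1}(x,y_\infty)=\delta_0$ and $g(y_\infty)=\varepsilon_0$. Ruling this out — equivalently, showing that $G$ cannot be flat just above $\delta_0$ — is the main obstacle, and it is precisely here that one must exploit the full strength of the hypothesis (compactness of the closed balls $\{y:d_{M_1}(x,y)\le r\}$, not only of the closures $\overline{B_{M_1}(x,r)}$) together with the continuity of $f$ near $y_\infty$. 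Once $G$ is known to be continuous and strictly increasing on the interval of interest, the continuity of $\Pi^f_x$ follows from the elementary fact that the inverse of a continuous strictly increasing function is continuous.
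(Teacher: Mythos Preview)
This theorem is not proved in the present paper; it is quoted from \cite{CaLi1} and used as a black box. So there is no in-paper argument to compare your proposal against, and the question reduces to whether your sketch stands on its own.

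It does not: the right-continuity step is left genuinely unfinished. You correctly reduce the problem to excluding a flat stretch of $G$ just above $\delta_0$, i.e.\ to ruling out the configuration ``there is $y_\infty$ with $d_{M_1}(x,y_\infty)=\delta_0$, $g(y_\infty)=\varepsilon_0$, and $g\le\varepsilon_0$ on all of $B_{M_1}(x,L)$ for some $L>\delta_0$''. But you never actually exclude it; you only assert that ``one must exploit the full strength of the hypothesis'' and then proceed as if $G$ were already known to be strictly increasing. That is the entire content of the step, and it is missing. Appealing to compactness of the closed ball $\{y:d_{M_1}(x,y)\le r\}$ rather than of $\overline{B_{M_1}(x,r)}$ does not help here: compactness gives you attainment of suprema, not strict growth of $G$.

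Worse, under the hypotheses as stated in this paper the obstacle is real. Take $M_1=[0,3]$ with the Euclidean metric (so every closed ball is compact), $x=0$, and the continuous, non-constant function $f(y)=y$ on $[0,1]$, $f(y)=1$ on $[1,2]$, $f(y)=y-1$ on $[2,3]$. Then $g=f$, $G(\delta)=\delta$ on $[0,1]$, $G\equiv 1$ on $[1,2]$, and $G(\delta)=\delta-1$ on $[2,3]$. One computes $E_f(0)=(0,2]$, $\Pi^f_0(1)=1$, but $\Pi^f_0(1+\eta)=2+\eta$ for small $\eta>0$, so $\Pi^f_0$ jumps at $\varepsilon=1$. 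Hence either the statement recorded here omits a hypothesis present in \cite{CaLi1}, or your approach would need an additional assumption (e.g.\ that $G$ is strictly increasing, or that level sets of $g$ have empty interior) to go through. Either way, the gap in your argument is not cosmetic: it is exactly where the difficulty lives.
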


As far as this work is concerned, sometimes it is convenient to understand $\Pi^f_x(\varepsilon)$ as a two parameter application given by $\Pi_f(x,\varepsilon)$ instead of a one parameter map. With this in mind, we propose an improvement of the results obtained so far.

\begin{definition}
Let $f:M_1\rightarrow M_2$ be a function such that $E_{f}(x) \not = \emptyset$ for any $x\in M_1$. Define
\[
    E_{f}(M_1)=\{(x,\varepsilon):x\in M_1\textrm{ and }\varepsilon\in E_{f}(x)\}
\]
and consider the two parameter continuity function $\Pi_{f}:  E_{f}(M_1) \rightarrow  (0,\infty)$ given by
\[
    \Pi_{f}(x,\varepsilon)=\Pi^{f}_x(\varepsilon).
\]
\end{definition}

The objective now is to prove that the aforementioned function is a continuous function in both variables. To this end, we first prove two fundamental results.

\begin{lemma}
\label{lem:aux}
Consider $\varepsilon>0$, a function $f:M_1\rightarrow M_2$ and a point $x\in M_1$. If $\delta\in\Delta_{f,x}(\varepsilon)$ and $z\in M_1$ is such that $d_1(x,z)<\delta$, then
\[
    \delta-d_1(x,z)\in\Delta_{f,z}(\varepsilon+d_2(f(x),f(z))).
\]
\end{lemma}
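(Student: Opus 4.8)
The plan is to unwind both memberships to their definitions and then apply the triangle inequality twice — once in $M_1$ and once in $M_2$. First I would record that the target radius $\delta-d_1(x,z)$ is genuinely positive, since the hypothesis $d_1(x,z)<\delta$ guarantees it, and likewise $\varepsilon+d_2(f(x),f(z))$ is positive because $\varepsilon>0$. Hence the assertion $\delta-d_1(x,z)\in\Delta_{f,z}(\varepsilon+d_2(f(x),f(z)))$ is well posed, and by the definition of suitability it amounts to proving the inclusion
\[
f\bigl(B_{M_1}(z,\delta-d_1(x,z))\bigr)\subset B_{M_2}\bigl(f(z),\varepsilon+d_2(f(x),f(z))\bigr).
\]

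Next I would pick an arbitrary $w\in B_{M_1}(z,\delta-d_1(x,z))$, so that $d_1(z,w)<\delta-d_1(x,z)$, and use the triangle inequality in $M_1$ to estimate $d_1(x,w)\le d_1(x,z)+d_1(z,w)<\delta$. Therefore $w\in B_{M_1}(x,\delta)$, and since by hypothesis $\delta\in\Delta_{f,x}(\varepsilon)$, i.e.\ $f(B_{M_1}(x,\delta))\subset B_{M_2}(f(x),\varepsilon)$, we conclude $f(w)\in B_{M_2}(f(x),\varepsilon)$, that is, $d_2(f(x),f(w))<\varepsilon$.

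Finally I would apply the triangle inequality in $M_2$, obtaining $d_2(f(z),f(w))\le d_2(f(z),f(x))+d_2(f(x),f(w))<d_2(f(x),f(z))+\varepsilon$, which is precisely the statement that $f(w)\in B_{M_2}(f(z),\varepsilon+d_2(f(x),f(z)))$. Since $w$ was an arbitrary point of $B_{M_1}(z,\delta-d_1(x,z))$, the desired inclusion follows, and the lemma is proved.

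As for the main obstacle: there is essentially none of substance here — the argument is just two nested triangle inequalities chained through the hypothesis on $\delta$. The only points that require a moment of care are checking that the new radius $\delta-d_1(x,z)$ is strictly positive, so that the conclusion is not degenerate, and making sure the inclusion is oriented exactly as the definition of $\Delta_{f,z}$ demands. Neither is difficult, and the lemma should be viewed as a technical preparation for the continuity result for $\Pi_f$ rather than as a substantive step in itself.
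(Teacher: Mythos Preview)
Your proof is correct and follows essentially the same route as the paper: pick a point in the smaller ball around $z$, use the triangle inequality in $M_1$ to land in $B_{M_1}(x,\delta)$, invoke $\delta\in\Delta_{f,x}(\varepsilon)$, and then use the triangle inequality in $M_2$. The only difference is cosmetic --- you take the extra care to note that $\delta-d_1(x,z)>0$, which the paper leaves implicit.
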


\begin{proof} Observe that if $y\in M_1$ and $d_1(y,z)<\delta-d_1(x,z)$, then we deduce by the Triangle Inequality that
\[
    d_1(x,y)<\delta.
\]

Thus, since $\delta\in\Delta_{f,x}(\varepsilon)$, it holds that $d_2(f(x),f(y))<\varepsilon$. Therewith
\[
    d_2(f(y),f(z))\leq d_2(f(y),f(x))+d_2(f(x),f(z))<\varepsilon+d_2(f(x),f(z)),
\]
what guarantees that $\delta-d_1(x,z)\in\Delta_{f,z}(\varepsilon+d_2(f(x),f(z))).$
\qquad \end{proof}

A consequence from the previous lemma can be stated as follows.

\begin{corollary}
\label{cor:aux2}
Let $\varepsilon>0$, $f:M_1\rightarrow M_2$, and assume that $x,z\in M_1$ satisfies $d_2(f(x),f(z))<\varepsilon$. If $\delta\in\Delta_{f,x}(\varepsilon-d_2(f(x),f(z)))$ and $d_1(x,z)<\delta$, then
\[
    \delta-d_1(x,z)\in\Delta_{f,z}(\varepsilon).
\]
\end{corollary}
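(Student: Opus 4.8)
The plan is to obtain Corollary~\ref{cor:aux2} as a direct specialization of Lemma~\ref{lem:aux}. The key observation is that the statement of the corollary is almost exactly the statement of the lemma, once we make the substitution $\varepsilon \mapsto \varepsilon - d_2(f(x),f(z))$ in the lemma. Concretely, I would first note that the hypothesis $d_2(f(x),f(z)) < \varepsilon$ guarantees that the quantity $\varepsilon - d_2(f(x),f(z))$ is a strictly positive real number, so it is a legitimate value to feed into the lemma as its ``$\varepsilon$''.

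Next I would invoke Lemma~\ref{lem:aux} with $\varepsilon$ replaced by $\varepsilon - d_2(f(x),f(z))$ and with the same $f$, $x$, $z$, $\delta$. The hypotheses of the lemma in this instance read: $\delta \in \Delta_{f,x}(\varepsilon - d_2(f(x),f(z)))$ and $d_1(x,z) < \delta$ --- which are precisely the hypotheses assumed in the corollary. Hence the lemma's conclusion applies, giving
\[
    \delta - d_1(x,z) \in \Delta_{f,z}\bigl((\varepsilon - d_2(f(x),f(z))) + d_2(f(x),f(z))\bigr).
\]

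Finally, I would simplify the argument of $\Delta_{f,z}$: the two occurrences of $d_2(f(x),f(z))$ cancel, leaving $\varepsilon$. Therefore $\delta - d_1(x,z) \in \Delta_{f,z}(\varepsilon)$, which is exactly the desired conclusion. This completes the proof.

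There is essentially no obstacle here: the corollary is a bookkeeping rephrasing of the lemma, and the only point requiring a moment's attention is verifying that the shifted $\varepsilon$ is positive so that the lemma is applicable, which is immediate from the standing hypothesis $d_2(f(x),f(z)) < \varepsilon$.
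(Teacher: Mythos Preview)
Your proof is correct and is exactly the approach intended by the paper: the corollary is stated there without proof, merely as ``a consequence from the previous lemma,'' and your substitution $\varepsilon \mapsto \varepsilon - d_2(f(x),f(z))$ in Lemma~\ref{lem:aux} is precisely that consequence.
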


We now address an important lemma that is used to prove the main theorem of this section.

\begin{lemma}
\label{lem:seq}
Assume that $f:M_1\rightarrow M_2$ is a continuous function. Let $\{x_n\}_{n=1}^\infty\subset M_1$ be a sequence converging to $x \in M_1$ and $\{\varepsilon_n\}_{n=1}^\infty$ a sequence of real numbers converging to $\varepsilon\in E_f(x)$ such that $\varepsilon_n\in E_f(x_n)$. For each $n\in\mathbb{N}$, define $\delta_n=\max\Delta_{f,x_n}(\varepsilon_n)$. Under these conditions, there exists $M>0$ such that $\delta_n\geq M$ for any $n\in\mathbb{N}$.
\end{lemma}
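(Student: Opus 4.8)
The plan is to argue by contradiction: suppose no such uniform lower bound $M$ exists. Since each $\delta_n = \max\Delta_{f,x_n}(\varepsilon_n)$ is a positive real number, failure of the conclusion means that $\inf_n \delta_n = 0$, so after passing to a subsequence (which I will not relabel) we may assume $\delta_n \to 0$. The goal is to derive a contradiction with the fact that $\varepsilon \in E_f(x)$, i.e.\ that $\Delta_{f,x}(\varepsilon)$ is a nonempty bounded set and hence $\Pi^f_x(\varepsilon) = \max\Delta_{f,x}(\varepsilon)$ is a well-defined positive number; the natural target contradiction is to show that arbitrarily small positive numbers fail to be suitable for the triplet $(f,x,\varepsilon)$, which is impossible since $\Delta_{f,x}(\varepsilon)$ contains an interval $(0,\Pi^f_x(\varepsilon)]$ by Theorem~\ref{thm:topology}.

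First I would fix a genuinely suitable radius for the limiting triplet: let $\delta^\ast = \tfrac{1}{2}\Pi^f_x(\varepsilon) > 0$, so that $\delta^\ast \in \Delta_{f,x}(\varepsilon)$ with room to spare. The idea is then to transport this suitable radius from $x$ to $x_n$ using Corollary~\ref{cor:aux2}. By continuity of $f$ and $x_n \to x$, we have $d_2(f(x),f(x_n)) \to 0$; also $\varepsilon_n \to \varepsilon$. So for all large $n$ we can guarantee simultaneously that $d_2(f(x),f(x_n)) < \varepsilon_n$, that $\varepsilon_n - d_2(f(x),f(x_n))$ is at least, say, $\varepsilon/2 > 0$, and — crucially — that $d_1(x,x_n) < \delta^\ast$ and that $\delta^\ast \in \Delta_{f,x}(\varepsilon_n - d_2(f(x),f(x_n)))$. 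This last membership is where I need to be careful: $\Delta_{f,x}(\cdot)$ is monotone in its argument (a smaller $\varepsilon$ gives a smaller suitable set), and I only control $\Delta_{f,x}(\varepsilon)$, so I should choose $\delta^\ast$ small relative to $\Pi^f_x$ of the \emph{reduced} epsilon. Concretely: since $\varepsilon_n - d_2(f(x),f(x_n)) \to \varepsilon$ from below (roughly), and $\Pi^f_x$ is... — here is the subtlety — I cannot invoke continuity of $\Pi^f_x$ at $\varepsilon$ without the compactness hypothesis of Theorem~\ref{thm:continuity}, which is not assumed in this lemma. So instead I will only use that $\Delta_{f,x}(\eta) \supset \Delta_{f,x}(\varepsilon)$ is false in general but $\Delta_{f,x}(\eta)$ is nonempty for $\eta$ slightly below $\varepsilon$ whenever it is nonempty at $\varepsilon$; more safely, pick any fixed $\eta \in E_f(x)$ with $\eta < \varepsilon$ (possible since $E_f(x)$ is an interval with left endpoint $0$), set $\delta^\ast = \tfrac12 \Pi^f_x(\eta)$, and for large $n$ ensure $\varepsilon_n - d_2(f(x),f(x_n)) \geq \eta$, whence $\delta^\ast \in \Delta_{f,x}(\eta) \subset \Delta_{f,x}(\varepsilon_n - d_2(f(x),f(x_n)))$ by monotonicity.

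Having secured these conditions, Corollary~\ref{cor:aux2} applied with the roles $x \leadsto x$, $z \leadsto x_n$, $\varepsilon \leadsto \varepsilon_n$, and $\delta \leadsto \delta^\ast$ yields
\[
    \delta^\ast - d_1(x,x_n) \in \Delta_{f,x_n}(\varepsilon_n).
\]
Since $\delta_n = \max\Delta_{f,x_n}(\varepsilon_n)$, this forces $\delta_n \geq \delta^\ast - d_1(x,x_n)$ for all large $n$. But $d_1(x,x_n) \to 0$, so $\liminf_n \delta_n \geq \delta^\ast > 0$, contradicting $\delta_n \to 0$. This contradiction establishes the existence of $M > 0$ with $\delta_n \geq M$ for all $n$; to get the bound for \emph{all} $n$ rather than all large $n$, simply shrink $M$ to also lie below the finitely many initial values $\delta_1,\dots,\delta_{n_0}$, which are positive. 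The main obstacle, as flagged above, is handling the $\varepsilon_n$-perturbation without access to continuity of $\Pi^f_x$: the fix is to choose the auxiliary suitable radius at a fixed $\eta < \varepsilon$ inside $E_f(x)$ and exploit only the elementary monotonicity of $\varepsilon \mapsto \Delta_{f,x}(\varepsilon)$.
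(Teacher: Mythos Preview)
Your argument is correct, and it takes a genuinely different route from the paper. The paper argues by contradiction as well, but instead of transporting a suitable radius from $x$ to $x_n$, it exploits maximality in the opposite direction: since $\delta_{n_k}+1/k$ is \emph{not} suitable for $(f,x_{n_k},\varepsilon_{n_k})$, there exist witnesses $y_k$ with $\delta_{n_k}\le d_1(x_{n_k},y_k)<\delta_{n_k}+1/k$ and $d_2(f(x_{n_k}),f(y_k))\ge\varepsilon_{n_k}$; then $y_k\to x$ and continuity of $f$ forces $d_2(f(x_{n_k}),f(y_k))\to 0$, contradicting $\varepsilon_{n_k}\to\varepsilon>0$. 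This is shorter and entirely self-contained (it uses only continuity of $f$ and the definition of $\Delta$), whereas your proof invokes Corollary~\ref{cor:aux2} and the interval structure of $E_f(x)$. On the other hand, your approach is effectively constructive --- it produces an explicit eventual lower bound $\delta^\ast=\tfrac12\Pi^f_x(\eta)$ for any fixed $\eta\in(0,\varepsilon)$ --- and it nicely foreshadows the $\liminf$ half of the continuity theorem that follows. Your handling of the subtlety (dropping to a fixed $\eta<\varepsilon$ so that only monotonicity of $\varepsilon\mapsto\Delta_{f,x}(\varepsilon)$ is needed, not continuity of $\Pi^f_x$) is exactly right; note, incidentally, that once you have $\delta_n\ge\delta^\ast-d_1(x,x_n)$ for large $n$ the contradiction wrapper is unnecessary and you may state the bound directly.
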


\begin{proof}
This result is proved by contradiction. Assume that there is a subsequence $\{\delta_{n_k}\}_{k=1}^\infty$ of $\{\delta_{n}\}_{n=1}^\infty$ such that $\delta_{n_k}\leq 1/k$ for each $k\in\mathbb{N}$. Thus, by the definition of $\delta_{n_k}$ and for each $k\in\mathbb{N}$, there exists $y_k\in M_1$ such that
\begin{equation}
\label{eq:seq01}
    \delta_{n_k}\leq d_1(x_{n_k},y_k)<\delta_{n_k} +(1/k)\textrm{ and } d_2(f(x_{n_k}),f(y_k))\geq\varepsilon_{n_k},
\end{equation}
since otherwise $\delta_{n_k}$ would not be the maximum of $\Delta_{f,x_{n_k}}(\varepsilon_{n_k})$. Now observe that by making $k\rightarrow\infty$ we can obtain from \eqref{eq:seq01} that
\begin{equation*}
    d_2(f(x),f(x))\geq\varepsilon>0,
\end{equation*}
which is a contradiction. This completes the proof of this result.
\qquad \end{proof}

We end this section by proving that the two parameter continuity function is a continuous mapping.

\begin{theorem}
Let $f:M_1\rightarrow M_2$ be a continuous function. If the one parameter continuity function is continuous, then the two parameter continuity function $\Pi_{f}:  E_{f}(M_1) \rightarrow  (0,\infty)$ is continuous.
\end{theorem}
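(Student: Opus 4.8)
The plan is to verify that $\Pi_f$ is sequentially continuous at an arbitrary point $(x,\varepsilon)\in E_f(M_1)$, which is enough since $E_f(M_1)$ carries the (metrizable) subspace topology inherited from $M_1\times\mathbb{R}^+$. Fix sequences $x_n\to x$ in $M_1$ and $\varepsilon_n\to\varepsilon$ with $\varepsilon_n\in E_f(x_n)$, and put $\delta_n:=\Pi_f(x_n,\varepsilon_n)=\max\Delta_{f,x_n}(\varepsilon_n)$ and $\delta:=\Pi_f(x,\varepsilon)=\max\Delta_{f,x}(\varepsilon)$; the goal $\delta_n\to\delta$ will be reached by proving $\liminf_n\delta_n\geq\delta$ and $\limsup_n\delta_n\leq\delta$ separately. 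First I would record three elementary remarks: (a) the map $\varepsilon\mapsto\Delta_{f,x}(\varepsilon)$ is non-decreasing for inclusion, since $\varepsilon_1\leq\varepsilon_2$ forces $B_{M_2}(f(x),\varepsilon_1)\subset B_{M_2}(f(x),\varepsilon_2)$, so each $\Pi^f_x$ is non-decreasing on $E_f(x)$; (b) continuity of $f$ gives $d_2(f(x_n),f(x))\to 0$; (c) Lemma \ref{lem:seq} furnishes $M>0$ with $\delta_n\geq M$ for all $n$. Note also that $(0,\varepsilon)\subset E_f(x)$, by the description of $E_f(x)$ recalled above.

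For the lower bound I would fix $\varepsilon'\in E_f(x)$ with $\varepsilon'<\varepsilon$ and set $\delta':=\Pi^f_x(\varepsilon')$, so $\delta'\in\Delta_{f,x}(\varepsilon')$ by Theorem \ref{thm:topology}. For all sufficiently large $n$ we have simultaneously $d_2(f(x),f(x_n))<\varepsilon_n$, $\varepsilon_n-d_2(f(x),f(x_n))>\varepsilon'$ and $d_1(x,x_n)<\delta'$; by remark (a) this yields $\delta'\in\Delta_{f,x}(\varepsilon')\subset\Delta_{f,x}(\varepsilon_n-d_2(f(x),f(x_n)))$, so Corollary \ref{cor:aux2}, applied with its roles $x\mapsto x$, $z\mapsto x_n$, $\varepsilon\mapsto\varepsilon_n$ and $\delta\mapsto\delta'$, gives $\delta'-d_1(x,x_n)\in\Delta_{f,x_n}(\varepsilon_n)$ and hence $\delta_n\geq\delta'-d_1(x,x_n)$. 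Letting $n\to\infty$ produces $\liminf_n\delta_n\geq\delta'$, and letting $\varepsilon'\uparrow\varepsilon$ together with the monotonicity and the assumed continuity of $\Pi^f_x$ upgrades this to $\liminf_n\delta_n\geq\Pi^f_x(\varepsilon)=\delta$.

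For the upper bound, note that for $n$ large $d_1(x_n,x)<M\leq\delta_n$ and $\delta_n\in\Delta_{f,x_n}(\varepsilon_n)$, so Lemma \ref{lem:aux} (roles $x\mapsto x_n$, $z\mapsto x$, $\varepsilon\mapsto\varepsilon_n$, $\delta\mapsto\delta_n$) gives $\delta_n-d_1(x_n,x)\in\Delta_{f,x}(\eta_n)$, where $\eta_n:=\varepsilon_n+d_2(f(x_n),f(x))\to\varepsilon$. If $\varepsilon$ is interior to $E_f(x)$, then $\eta_n\in E_f(x)$ for $n$ large, so $\delta_n-d_1(x_n,x)\leq\Pi^f_x(\eta_n)\to\Pi^f_x(\varepsilon)=\delta$ by continuity of $\Pi^f_x$, and $\limsup_n\delta_n\leq\delta$ follows. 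I expect the main obstacle to be the remaining endpoint case $\varepsilon=\max E_f(x)$: there $\eta_n$ may exceed $\varepsilon$, in which case $\Delta_{f,x}(\eta_n)=(0,\infty)$ by Theorem \ref{thm:topology} and the estimate above becomes vacuous. In that regime $f$ is necessarily bounded and $\varepsilon=\sup_{y\in M_1}d_2(f(x),f(y))$, a supremum which is attained; I would treat it by a separate contradiction argument — assuming a subsequence with $\delta_n\to L>\delta$, so that every $\delta^*\in(\delta,L)$ is suitable for $(f,x_n,\varepsilon_n)$ for $n$ large, transporting this to $x$ via the triangle inequality to obtain $f(B_{M_1}(x,\delta^*))\subset\overline{B}_{M_2}(f(x),\varepsilon)$, and then contradicting the maximality defining $\delta$ by exploiting the monotonicity of each $\Pi^f_{x_n}$ together with the continuity of $z\mapsto\sup_{y\in M_1}d_2(f(z),f(y))$ (itself a consequence of the continuity of $f$). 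Combining the two bounds gives $\delta_n\to\delta$, and since $(x,\varepsilon)\in E_f(M_1)$ was arbitrary, $\Pi_f$ is continuous on $E_f(M_1)$.
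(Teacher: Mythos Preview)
Your argument follows the paper's proof almost exactly: both establish
$\limsup_n\Pi_{x_n}^f(\varepsilon_n)\le\Pi_x^f(\varepsilon)\le\liminf_n\Pi_{x_n}^f(\varepsilon_n)$
by combining Lemma~\ref{lem:aux}, Corollary~\ref{cor:aux2} and Lemma~\ref{lem:seq} with the assumed continuity of the one--parameter map $\Pi_x^f$. Your lower--bound variant (fixing an auxiliary $\varepsilon'<\varepsilon$ and then sending $\varepsilon'\uparrow\varepsilon$) is a harmless rearrangement of the paper's device $\bar\delta_n=\Pi_x^f\bigl(\varepsilon_n-d_2(f(x),f(x_n))\bigr)$, and in fact has the mild advantage that $\varepsilon'\in E_f(x)$ is never in doubt.

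The one substantive difference is your handling of the boundary case $\varepsilon=\max E_f(x)$. You are correct that when $\eta_n=\varepsilon_n+d_2(f(x_n),f(x))$ exceeds $\sup E_f(x)$ the quantity $\Pi_x^f(\eta_n)$ is undefined and the upper--bound inequality becomes vacuous; the paper simply writes $\Pi_x^f(\eta_n)$ and takes the limit without commenting on this, so you are being more scrupulous than the source. That said, the contradiction sketch you propose is not yet a proof: from $f\bigl(B_{M_1}(x,\delta^*)\bigr)\subset\overline{B}_{M_2}(f(x),\varepsilon)$ you must still rule out points $y$ with $d_1(x,y)\in(\delta,L)$ and $d_2(f(x),f(y))=\varepsilon$ exactly, and it is not clear how ``monotonicity of each $\Pi^f_{x_n}$'' or the continuity of $z\mapsto\sup_{y}d_2(f(z),f(y))$ actually deliver that exclusion. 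You have correctly located the only delicate spot in the argument; the endpoint sketch needs to be completed (or replaced by a direct argument) before that case is fully settled.
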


\begin{proof}
Consider a point $(x,\varepsilon)\in E_{f}(M_1)$ and a sequence $\{(x_n,\varepsilon_n)\}_{n=1}^\infty$ in $E_{f}(M_1)$ which converges to $(x,\varepsilon)$. That is, $x_n\rightarrow x$ in $M_1$ and $\varepsilon_n\rightarrow \varepsilon$ in $\mathbb{R}$, when $n\rightarrow\infty$.

Set $\delta_n=\max{\Delta_{f,x_n}(\varepsilon_n)}=\Pi_{x_n}^f(\varepsilon_n)$. By Lemma \ref{lem:seq}, there exists $N_0\in\mathbb{N}$ such that $d_1(x_n,x)<\delta_n$ for any $n\geq N_0$. Since $\delta_n\in\Delta_{f,x_n}(\varepsilon_n)$, Lemma \ref{lem:aux} ensures that
\begin{equation}
\label{eq:above}
    \delta_n-d_1(x_n,x)\leq \Pi_x^f(\varepsilon_n+d_2(f(x_n),f(x))),
\end{equation}
for any $n\geq N_0$. By applying upper limit in both sides of inequality \eqref{eq:above} and using the theorem hypotheses about the one parameter continuity function, we obtain
\begin{equation}
\label{eq:main01}
    \limsup_{n\rightarrow\infty}{\Pi_{x_n}^f(\varepsilon_n)}\leq \Pi_x^f(\varepsilon).
\end{equation}

On the other side, by the continuity of $f$, there also exists $N_1\in\mathbb{N}$ such that
\[
    d_2(f(x),f(x_n))<\varepsilon_n
\] for any $n\geq N_1$. Define the positive real value
\[
    \bar{\delta}_n=\max{\Delta_{f,x}(\varepsilon_n-d_2(f(x),f(x_n)))}.
\]

By Lemma \ref{lem:seq} we choose $N_2\in\mathbb{N}$, if necessary, such that $d_1(x,x_n)<\bar{\delta}_n$ for any $n\geq N_2$. Therefore, Corollary \ref{cor:aux2} ensures that
\[
    \bar{\delta}_n-d_1(x,x_n)\leq \Pi_{x_n}^f(\varepsilon_n)
\]
for any $n\geq N_2$. Now applying the lower limit in both sides of the previous inequality, we obtain
\begin{equation}
\label{eq:main02}
    \Pi_x^f(\varepsilon)\leq \liminf_{n\rightarrow\infty}{\Pi_{x_n}^f(\varepsilon_n)}.
\end{equation}

It is now clear, by \eqref{eq:main01} and \eqref{eq:main02}, that  $\lim_{n\rightarrow\infty}{\Pi_{x_n}^f(\varepsilon_n)}=\Pi_x^f(\varepsilon)$, which implies that the two parameter continuity function is continuous.
\qquad \end{proof}

Observe that the previous theorem states a remarkable phenomenon about the continuity function. It is a well known fact that continuity in each variable is not enough for the global function to be continuous itself. However, this last theorem showcases that the continuity function does not suffer from this pathology.

\begin{theorem}
\label{thm:conclusao}
Consider $f:M_1\rightarrow M_2$ a non-constant, continuous function. Suppose that for any $r>0$ and $x \in M_1$ the closure of $B_{M_1}(x,r)$ is a compact set in $M_1$. Under these conditions, $\Pi_f$ is a continuous application.
\end{theorem}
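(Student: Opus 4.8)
The plan is to obtain this statement as an immediate consequence of Theorem~\ref{thm:continuity} together with the theorem immediately preceding it, so the argument will be essentially an assembly of results already established rather than a fresh computation.

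First I would check that $\Pi_f$ is genuinely defined on all of $E_f(M_1)$ and that this domain is nonempty, so that the assertion is not vacuous. Since $f$ is continuous and non-constant, the remark following the dichotomy for $E_f(x)$ guarantees that $E_f(x)\neq\emptyset$ for every $x\in M_1$; hence $E_f(M_1)=\{(x,\varepsilon):x\in M_1,\ \varepsilon\in E_f(x)\}$ is nonempty and the two parameter continuity function $\Pi_f\colon E_f(M_1)\to(0,\infty)$ is well defined.

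Next I would produce the continuity of the one parameter continuity function for \emph{every} base point. For each fixed $x\in M_1$, the hypothesis that the closure of $B_{M_1}(x,r)$ is compact for every $r>0$ is exactly the standing assumption of Theorem~\ref{thm:continuity}; applying that theorem, one for each $x$, yields that $\Pi^f_x$ is a continuous function for every $x\in M_1$. In the terminology of the preceding theorem, this says precisely that the one parameter continuity function is continuous.

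Finally I would invoke the theorem immediately before the present statement: its hypotheses are that $f$ is continuous (true by assumption) and that the one parameter continuity function is continuous (established in the previous step), and its conclusion is that $\Pi_f\colon E_f(M_1)\to(0,\infty)$ is continuous. This is exactly the desired claim. I do not expect a genuine obstacle here, since every ingredient is already in place; the only point deserving a word of care is that the fixed-$x$ compactness hypothesis of Theorem~\ref{thm:continuity} must hold uniformly over $x\in M_1$ in order to conclude continuity of $\Pi^f_x$ at \emph{all} points, and this is guaranteed because the compactness assumption in the present statement is quantified over all $x\in M_1$.
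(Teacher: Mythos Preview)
Your proposal is correct and follows essentially the same approach as the paper: apply Theorem~\ref{thm:continuity} at each $x\in M_1$ to obtain continuity of every one parameter function $\Pi^f_x$, then feed this into the preceding theorem to conclude continuity of $\Pi_f$. If anything, your write-up is more explicit than the paper's, which compresses the argument into a single sentence.
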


\begin{proof}
By making use of Theorem \ref{thm:continuity}, we can finally state that the two parameters continuity function $\Pi_f$ is a continuous function itself, whenever $f$ is a continuous, non constant function and the closure of  $B_{M_1}(x,r)$ is a compact set in $M_1$.
\qquad \end{proof}

\begin{corollary}
If $f:M_1 \subset \mathbb{R} \rightarrow M_2 \subset \mathbb{R}$ is a non-constant, continuous function, then the two variable continuity function $\Pi_f$ is a continuous application. Besides that, the set
\[
    \{(x, \varepsilon, \delta) \in \mathbb{R}^3 : (x,\varepsilon) \in E_f \text{ and } \Pi_f(x,\varepsilon) = \delta\}
\] is a topological manifold homeomorphic to $E_f$.
\end{corollary}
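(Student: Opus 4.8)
The first assertion, that $\Pi_f$ is continuous, is the content of Theorem~\ref{thm:conclusao} specialized to the real line: with $M_1,M_2\subset\mathbb{R}$ and $f$ non-constant and continuous, the closure of each ball $B_{M_1}(x,r)$ is compact by the Heine--Borel property, so the hypotheses of that theorem are met and $\Pi_f$ is continuous on $E_f$. The substantive part is the claim about the graph
\[
    G=\{(x,\varepsilon,\delta)\in\mathbb{R}^3:(x,\varepsilon)\in E_f,\ \Pi_f(x,\varepsilon)=\delta\},
\]
and my plan is to handle it in two steps: first that $G$ is homeomorphic to $E_f$, and then that $E_f$ itself is a topological manifold.

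The first step is the easy, purely formal one. I would define $\Phi:E_f\to G$ by $\Phi(x,\varepsilon)=(x,\varepsilon,\Pi_f(x,\varepsilon))$. Continuity of $\Pi_f$ makes $\Phi$ continuous; $\Phi$ is evidently a bijection onto $G$; and its inverse is the restriction to $G$ of the coordinate projection $(x,\varepsilon,\delta)\mapsto(x,\varepsilon)$, which is continuous as a restriction of a continuous map $\mathbb{R}^3\to\mathbb{R}^2$. Hence $\Phi$ is a homeomorphism. This is nothing but the general fact that the graph of a continuous function is homeomorphic to its domain.

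For the second step I would return to the classification of the sets $E_f(x)$ recalled in Section~\ref{sec:theory}: since $f$ is non-constant, each $E_f(x)$ is a nonempty interval of one of the forms $(0,\infty)$, $(0,b(x))$ or $(0,b(x)]$, where $b(x)\in(0,\infty]$; one computes $b(x)=\sup_{y\in M_1}|f(y)-f(x)|$, which depends continuously on $x$ because $f$ does. Thus $E_f$ is the region lying below, and partly on, the graph of $b$ over $M_1$. On the open part, where one is strictly below the graph, $E_f$ is open in $M_1\times(0,\infty)$ and hence locally Euclidean; along the rim $\varepsilon=b(x)$, when that value belongs to $E_f(x)$, I would straighten the fibers by $(x,\varepsilon)\mapsto(x,\varepsilon/b(x))$ (where $b<\infty$), turning every such fiber into $(0,1]$ and producing charts modelled on a half-space. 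Collating these charts --- and assuming $M_1$ is itself a $1$-manifold, such as an interval --- equips $E_f$, and therefore $G$ through $\Phi$, with the structure of a topological manifold (with boundary).

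The main obstacle is exactly this last step, and it is more delicate than mere bookkeeping. Two points need genuine care: the behaviour of $b$ where it equals $+\infty$ or where it is only lower semicontinuous; and, more seriously, the points $(x_0,b(x_0))$ near which the fiber type switches, that is, where $E_f(x)=(0,b(x)]$ on one side of $x_0$ and $E_f(x)=(0,b(x))$ on the other. Near such a point $E_f$ looks locally like a half-plane with only one of its two boundary rays attached, and that space is not locally Euclidean. So the argument goes through only if one can show that such transitions do not occur under the stated hypotheses, or else isolates the extra regularity of $f$ that forbids them; pinning down exactly when the fiber type may change, and reconciling this with the manifold conclusion, is where the real work lies.
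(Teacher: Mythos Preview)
The paper states this corollary without proof, immediately after Theorem~\ref{thm:conclusao}. The intended argument is exactly your first two steps: continuity of $\Pi_f$ from Theorem~\ref{thm:conclusao}, and the homeomorphism $G\cong E_f$ via the projection $(x,\varepsilon,\delta)\mapsto(x,\varepsilon)$, which is the standard fact about graphs of continuous maps. On these points your proposal and the paper's implicit reasoning coincide.

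Your third step goes well beyond what the paper does. The authors never verify that $E_f$, and hence $G$, is locally Euclidean; the word ``manifold'' appears to be used informally, with the homeomorphism to a planar set taken as sufficient. The obstacle you isolate is genuine and is not an artifact of your method: one can write down continuous non-constant $f:\mathbb{R}\to\mathbb{R}$ for which the fiber type switches. For instance, with $f(y)=0$ for $y\le0$ and $f(y)=y/(1+y)$ for $y>0$, the range is $[0,1)$, one has $b(x)=\max\bigl(f(x),\,1-f(x)\bigr)$, and $b(x)$ is attained precisely when $f(x)\ge\tfrac12$, i.e.\ when $x\ge1$. Thus $E_f(x)=(0,b(x))$ for $x<1$ and $E_f(x)=(0,b(x)]$ for $x\ge1$, and near $(1,\tfrac12)$ the set $E_f$ is a half-plane with only one of its two boundary rays attached, which is not a topological manifold even allowing boundary. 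So you have located a gap in the corollary as stated rather than in your own reasoning.

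One smaller point: your appeal to Heine--Borel needs the closure in $M_1$ of every ball to be compact, and this fails for general $M_1\subset\mathbb{R}$ (take $M_1=(0,\infty)$, as in Example~\ref{ex:log}). The paper is equally casual about this; the corollary is best read with $M_1$ closed in~$\mathbb{R}$.
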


We end this section with the formal definition of the surface associated to the two parameter continuity function.

\begin{definition}
The manifold defined by the aforementioned corollary is called the  \emph{$\varepsilon$--$\delta$ manifold} for $f$.
\end{definition}

\section{The Continuity Function as a Computable Diffeomorphism}
\label{sec:algorithm}

At this point it is known that there is a continuous function $\Pi_f$ which provides the maximal $\delta$ for the $\varepsilon$--$\delta$ relation for any given $f$ under the conditions of Theorem \ref{thm:conclusao}. We are now qualified to investigate in which circumstances the two parameter continuity function is a computable diffeomorphism. Let us begin with a definition.

\begin{definition}
\label{def:lagrange}
Given a function $f: M_1 \rightarrow M_2$ and a point $x \in M_1$, we are going to say that $f$ satisfies the \emph{Lagrange Propriety} at $x$ if the following two conditions hold:

\begin{itemize}
    \item[(i)] There exists a $C^1$ function $\Gamma: (-\zeta,\zeta) \subset \mathbb{R} \rightarrow \mathbb{R}$ such that \begin{equation*}\Gamma(r) = \sup_{y \in B_{M_1}[x,r]\setminus\{x\}} \frac{d_2(f(x), f(y))}{d_1(x,y)}\end{equation*} for all $r \in (0,\zeta)$;
    \item[(ii)] For all $r \in  (0,\zeta)$, there is an element $y_r \in  B_{M_1}[x,r]\setminus\{x\}$ such that \begin{equation*}\sup_{y \in B_{M_1}[x,r]\setminus\{x\}} \frac{d_2(f(x), f(y))}{d_1(x,y)}=\frac{d_2(f(x), f(y_r))}{d_1(x,y_r)}.\end{equation*}
\end{itemize}
\end{definition}

The result that follows presents a simple criterion for checking if a given function satisfies the Lagrange Propriety at a certain point.

\begin{lemma}
Let $U \subset \mathbb{R}$ be an open set. If $f: U \rightarrow \mathbb{R}$ is a $C^2$ function, then $f$ satisfies the Lagrange Propriety at $x \in U$ provided that $f'(x)f''(x) \not =0$.
\end{lemma}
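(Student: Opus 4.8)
The plan is to work locally near $x$ and reduce everything to the one-dimensional Mean Value Theorem. Since $U\subset\mathbb{R}$ is open and $f$ is $C^2$, choose $\zeta>0$ small enough that the closed ball $B_{\mathbb{R}}[x,\zeta]=[x-\zeta,x+\zeta]$ is contained in $U$ and, using $f'(x)\neq 0$ together with continuity of $f'$, that $f'$ has constant sign (say $f'>0$) on this interval. Shrinking $\zeta$ further by the same argument applied to $f''$, we may also assume $f''$ has constant sign on $[x-\zeta,x+\zeta]$; this is exactly where the hypothesis $f'(x)f''(x)\neq 0$ is used. The geometric meaning is that on this interval $f$ is strictly monotone and strictly convex (or strictly concave), so the ratio $|f(x)-f(y)|/|x-y|$ — the slope of the secant from $(x,f(x))$ to $(y,f(y))$ — is a \emph{monotone} function of $y$, and the supremum over $y\in B_{\mathbb{R}}[x,r]\setminus\{x\}$ is attained at one of the two endpoints $y=x\pm r$.

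First I would verify condition (ii): for $r\in(0,\zeta)$, I claim the supremum in the Lagrange Propriety is a maximum, attained at an explicit $y_r\in\{x-r,x+r\}$. Indeed, for $y\neq x$ write $g(y)=(f(y)-f(x))/(y-x)$; by the Mean Value Theorem $g(y)=f'(\xi)$ for some $\xi$ strictly between $x$ and $y$, and since $f'$ is strictly monotone on $[x-\zeta,x+\zeta]$ (because $f''$ has constant sign), $g$ is itself monotone in $y$ on each side of $x$ and the one-sided limits at $x$ equal $f'(x)$. A short case analysis on the signs of $f'(x)$ and $f''(x)$ then shows that $|g(y)|$ is nondecreasing in $|y-x|$, so its maximum over $0<|y-x|\le r$ is attained at $|y-x|=r$; pick $y_r$ to be whichever of $x\pm r$ realizes the larger value (if both give the same value, either works). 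This gives the required $y_r\in B_{\mathbb{R}}[x,r]\setminus\{x\}$.

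Next I would verify condition (i), i.e.\ that $\Gamma(r):=\sup_{y\in B_{\mathbb{R}}[x,r]\setminus\{x\}}|f(x)-f(y)|/|x-y|$ extends to a $C^1$ function on a symmetric interval $(-\zeta,\zeta)$. From the previous step, for $r\in(0,\zeta)$ we have
\[
\Gamma(r)=\max\left\{\left|\frac{f(x+r)-f(x)}{r}\right|,\ \left|\frac{f(x-r)-f(x)}{r}\right|\right\},
\]
and in fact, once $\zeta$ is small, the sign analysis pins down \emph{which} of the two terms is the maximum for all $r\in(0,\zeta)$ — it is a single fixed branch, say $\Gamma(r)=\bigl(f(x+r)-f(x)\bigr)/r$ in the case $f'(x)>0$, $f''(x)>0$ (the other three cases are symmetric). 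Now $r\mapsto (f(x+r)-f(x))/r$ is defined and smooth for $r\neq 0$, and since $f$ is $C^2$ a Taylor expansion gives $(f(x+r)-f(x))/r = f'(x)+\tfrac12 f''(x)r + o(r)$, so this expression extends continuously to $r=0$ with value $f'(x)$, and a further computation of its derivative (or a second-order Taylor estimate) shows the extension is $C^1$ across $0$ with derivative $\tfrac12 f''(x)$ at $0$. Defining $\Gamma$ on all of $(-\zeta,\zeta)$ by this single smooth formula (evaluated at $r=0$ by the limit) then furnishes the $C^1$ function required in Definition~\ref{def:lagrange}(i), since it agrees with the supremum for $r\in(0,\zeta)$ by construction.

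The main obstacle I anticipate is purely bookkeeping rather than conceptual: handling the four sign combinations of $\bigl(f'(x),f''(x)\bigr)$ and confirming that in each one a \emph{single} endpoint branch ($y=x+r$ or $y=x-r$) wins the maximum for all sufficiently small $r>0$, so that $\Gamma$ is given by one smooth formula near $0$ rather than a $\max$ of two (which could fail to be $C^1$ at points where the two branches cross). The hypothesis $f'(x)f''(x)\neq 0$ is precisely what prevents such a crossing at $r=0$: it guarantees the two secant-slope branches separate to first order, so after shrinking $\zeta$ they never meet on $(0,\zeta)$. Apart from that, every step is an application of the Mean Value Theorem and elementary Taylor estimates.
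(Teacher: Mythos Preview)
Your approach is essentially the paper's: both introduce the difference quotient $g(y)=(f(y)-f(x))/(y-x)$ (with $g(x)=f'(x)$), use $f'(x)f''(x)\neq 0$ to force $g$ to have constant sign and be strictly monotone near $x$, and then set $\Gamma(r)=g(x\pm r)$ for one fixed choice of sign. The paper does this more economically by observing directly that $g\in C^1$ with $g'(x)=f''(x)/2\neq 0$, which delivers both the monotonicity of $g$ and the $C^1$ regularity of $\Gamma$ on $(-\zeta,\zeta)$ in one stroke, whereas you route through the Mean Value Theorem and a Taylor expansion.

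Two small slips worth tightening. First, writing $g(y)=f'(\xi)$ via the Mean Value Theorem does not by itself give that $g$ is monotone in $y$, since the intermediate point $\xi$ is not known to depend monotonically on $y$; the cleanest fix is exactly the paper's observation $g'(x)=f''(x)/2\neq 0$ together with continuity of $g'$. Second, the assertion that $|g(y)|$ is nondecreasing in $|y-x|$ is false on one side of $x$ in each sign case (e.g.\ if $f'(x)>0$ and $f''(x)>0$ then $g$ is positive and increasing, so $|g(y)|=g(y)$ \emph{decreases} as $y$ moves left of $x$). What is true, and all you need, is that $|g|$ itself is monotone on $[x-\zeta,x+\zeta]$, so its maximum over $[x-r,x+r]\setminus\{x\}$ is attained at a single fixed endpoint $x+r$ or $x-r$ independently of $r$; this also removes your worry about the two branches of the $\max$ crossing.
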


\begin{proof}
Consider the auxiliary function given by
\[
    g(y) =
    \begin{cases}
        \displaystyle \frac{f(y) - f(x)}{y-x}, & \text{ if } y \not = x
    \\[10pt]
        f'(x), &\text { if } y = x
    \end{cases}
\]

Note that $g$ is a $C^1$ application such that $g'(x) = f''(x)/2 \not = 0$. Since $g'$ is continuous, then, by its signal conservation, $g$ is a strictly monotonic function in a certain neighborhood of $x$. Because $f'(x) \not = 0$,  then $g(x) \not =0$. Hence $g$ is a strictly monotonic function that does not change sign in a possibly smaller neighborhood.

Without loss of generality, assume that $g$ is positive and increasing. It is easy to see, under this assumptions, that
\[
    \sup_{y \in B_{M_1}[x,r]\setminus\{x\}} \frac{d_2(f(x), f(y))}{d_1(x,y)} =\sup_{y \in B_{M_1}[x,r]\setminus\{x\}} |g(y)| =g(x+r),
\]
for all sufficiently small $r$. Letting $\Gamma(r) = g(x+r)$, we conclude the first demand of the previous definition. The second one is obtained easily by noting that
\[
    g(x+r) = \frac{f(x+r) - f(x)}{(x+r) - x}.
\]
The other configurations for $g$ follows analogously.
\qquad \end{proof}

Let us emphasize that the hypotheses of last lemma does not settle a necessary condition for the Lagrange Propriety to be satisfied. For instance, affine functions are within the postulates prescribe by Definition \ref{def:lagrange}, although its second derivative is zero everywhere.

As proved in the literature, if $f: M_1 \rightarrow M_2$ satisfies the Lagrange Propriety, then the one parameter $\Pi_x^f$ gains a local boost in regularity provided that $\Gamma$ itself is regular. Besides that, the same work also proves that $\Pi_x^f$ must also satisfies an equation in terms of $\Gamma$. Let us recall this result here.

\begin{theorem}[cf. \cite{CaLi1}]
\label{thm:main_fact}
Suppose that $f: M_1 \rightarrow M_2$ is any given function that satisfies the Lagrange Propriety at $x\in M_1$. If $\Gamma(0) \not = 0$ and $\Gamma'(0) > 0$, then the continuity function of $f$ is a $C^k$ diffeomorphism in $(0,\varepsilon_0)$, provided that $\Gamma$ is a $C^k$ application. In this case, if $\varepsilon \in (0,\varepsilon_0)$ and if $\delta$ is such that
\begin{equation}
\label{eq:cruz}
    \varepsilon = \delta \Gamma(\delta) =: \Delta (\delta),
\end{equation}
then $\delta = \Pi_x^f(\varepsilon)$.
\end{theorem}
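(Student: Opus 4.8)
The plan is to prove that, on a suitably small interval $(0,\varepsilon_0)$, the continuity function $\Pi_x^f$ is precisely the inverse of the map $\Delta(\delta)=\delta\,\Gamma(\delta)$ introduced in \eqref{eq:cruz}; all the assertions of the statement then follow at once.

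First I would record the elementary monotonicity properties of $\Gamma$ that make $\Delta$ well behaved. Since the sets $B_{M_1}[x,r]\setminus\{x\}$ increase with $r$, the supremum in Definition \ref{def:lagrange}(i) is non-decreasing, so $\Gamma$ is non-decreasing on $(0,\zeta)$; being a supremum of non-negative ratios it is also non-negative, and since $\Gamma(0)\neq 0$ and $\Gamma$ is continuous we get $\Gamma>0$ on $(0,\zeta)$. As $\Gamma$ is moreover $C^1$, $\Gamma'\geq 0$ there, hence $\Delta'(\delta)=\Gamma(\delta)+\delta\,\Gamma'(\delta)\geq\Gamma(\delta)>0$ on $(0,\zeta)$; thus $\Delta$ is $C^k$ whenever $\Gamma$ is, strictly increasing, and in particular injective on $(0,\zeta)$. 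Because $\Gamma'(0)>0$ and $\Gamma'$ is continuous I may fix $\zeta''\in(0,\zeta)$ with $\Gamma'>0$ on $(-\zeta'',\zeta'')$, so that $\Gamma$ is \emph{strictly} increasing on $(0,\zeta'')$; I then set $\varepsilon_0:=\zeta''\,\Gamma(\zeta'')$, and note that $\Delta$ restricts to a $C^k$ diffeomorphism of $(0,\zeta'')$ onto $(0,\varepsilon_0)$ by the inverse function theorem.

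Now fix $\varepsilon\in(0,\varepsilon_0)$ and put $\delta:=\Delta^{-1}(\varepsilon)\in(0,\zeta'')$; I claim $\Delta_{f,x}(\varepsilon)=(0,\delta]$. For the inclusion $\supseteq$ it is enough to show $\delta\in\Delta_{f,x}(\varepsilon)$: given $y\in B_{M_1}(x,\delta)\setminus\{x\}$, from $y\in B_{M_1}[x,d_1(x,y)]\setminus\{x\}$ we get $d_2(f(x),f(y))\leq d_1(x,y)\,\Gamma(d_1(x,y))=\Delta(d_1(x,y))<\Delta(\delta)=\varepsilon$ by strict monotonicity of $\Delta$, and the inequality is trivial at $y=x$, so $f(B_{M_1}(x,\delta))\subseteq B_{M_2}(f(x),\varepsilon)$. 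For the inclusion $\subseteq$ I invoke clause (ii) of the Lagrange Propriety at radius $\delta$: it yields $y_\delta\in B_{M_1}[x,\delta]\setminus\{x\}$ with $d_2(f(x),f(y_\delta))/d_1(x,y_\delta)=\Gamma(\delta)$. Writing $\rho=d_1(x,y_\delta)\leq\delta$ and using $y_\delta\in B_{M_1}[x,\rho]\setminus\{x\}$ gives $\Gamma(\delta)\leq\Gamma(\rho)$, and strict monotonicity of $\Gamma$ on $(0,\zeta'')$ forces $\rho=\delta$; hence $d_2(f(x),f(y_\delta))=\delta\,\Gamma(\delta)=\varepsilon$. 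Consequently, for every $\delta'>\delta$ the point $y_\delta$ lies in $B_{M_1}(x,\delta')$ while $d_2(f(x),f(y_\delta))=\varepsilon\not<\varepsilon$, so $\delta'\notin\Delta_{f,x}(\varepsilon)$. This proves $\Delta_{f,x}(\varepsilon)=(0,\delta]$; in particular it is non-empty and bounded, so $\varepsilon\in E_f(x)$ and $\Pi_x^f(\varepsilon)=\max\Delta_{f,x}(\varepsilon)=\delta=\Delta^{-1}(\varepsilon)$.

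It remains to read off the conclusions. The identity $\Pi_x^f=\Delta^{-1}$ on $(0,\varepsilon_0)$, together with the first paragraph, shows that the restriction of $\Pi_x^f$ to $(0,\varepsilon_0)$ is a $C^k$ diffeomorphism (its inverse being $\Delta$); and if $\varepsilon\in(0,\varepsilon_0)$ and $\varepsilon=\delta\,\Gamma(\delta)=\Delta(\delta)$, then $\delta$ is the unique such number by injectivity of $\Delta$ on $(0,\zeta)$ and equals $\Delta^{-1}(\varepsilon)=\Pi_x^f(\varepsilon)$, which is \eqref{eq:cruz}. I expect the inclusion $\Delta_{f,x}(\varepsilon)\subseteq(0,\delta]$ to be the crux: the bound $d_2(f(x),f(y))\leq\Delta(d_1(x,y))$ is automatic, but certifying that $\delta$ is not merely suitable but \emph{maximal} requires producing a witness at distance \emph{exactly} $\delta$ from $x$, and this is exactly where the attainment clause (ii) and the strict growth of $\Gamma$ near the origin (namely, $\Gamma'(0)>0$) are both used — without them the extremal ratio could be realized only at strictly smaller radii and the argument would collapse.
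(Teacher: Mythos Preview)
Your argument is correct. The paper does not supply its own proof of this theorem --- it is quoted verbatim from \cite{CaLi1} and labeled ``cf.\ \cite{CaLi1}'' --- so there is nothing to compare against directly. That said, your route is precisely the natural one and almost certainly coincides in substance with the original: you identify $\Pi_x^f$ with $\Delta^{-1}$ on a small interval by showing $\Delta_{f,x}(\varepsilon)=(0,\Delta^{-1}(\varepsilon)]$, and then read off the $C^k$-diffeomorphism conclusion from the inverse function theorem applied to $\Delta$.

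The one place worth a small remark is the ``$\supseteq$'' half: you only verify $\delta\in\Delta_{f,x}(\varepsilon)$ and tacitly use that $\Delta_{f,x}(\varepsilon)$ is downward closed (equivalently, Theorem~\ref{thm:topology}); it would do no harm to say this explicitly. Your diagnosis of the crux is exactly right: the attainment clause (ii) together with strict monotonicity of $\Gamma$ near the origin is what pins the extremizer $y_\delta$ at distance exactly $\delta$, and that is what certifies maximality of $\delta$. Without $\Gamma'(0)>0$ one could have $\Gamma$ locally constant and the witness $y_\delta$ could sit at a strictly smaller radius, spoiling the argument.
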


Last theorem ensures the following important result.

\begin{corollary}
Let $U \subset \mathbb{R}$ be an open set and $x\in U$. If $f: U \rightarrow \mathbb{R}$ is a $C^k$ function, $k\geq 2$, and $f'(x)f''(x) \not =0$, then the one parameter $\Pi_x^f$ is a $C^{k-1}$ diffeomorphism in a neighborhood of $x$. As before, if $\varepsilon \in (0,\varepsilon_0)$ and if $\delta$ is such that
\begin{equation*}
    \varepsilon = \delta \Gamma(\delta) =: \Delta (\delta),
\end{equation*}
then $\delta = \Pi_x^f(\varepsilon)$.
\end{corollary}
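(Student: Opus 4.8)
The plan is to derive this corollary as a direct specialization of Theorem \ref{thm:main_fact}, using the previous lemma to verify its hypotheses. First I would invoke the lemma stated just above (the one asserting that a $C^2$ function $f:U\to\mathbb{R}$ with $f'(x)f''(x)\neq 0$ satisfies the Lagrange Propriety at $x$); since $k\geq 2$, the hypothesis $f\in C^k$ certainly gives $f\in C^2$, so $f$ satisfies the Lagrange Propriety at $x$ and, moreover, the proof of that lemma exhibits $\Gamma(r)=g(x+r)$ where $g$ is the divided-difference function. The key observation is that this $\Gamma$ inherits regularity from $f$: since $g(y)=\bigl(f(y)-f(x)\bigr)/(y-x)$ for $y\neq x$ and $g(x)=f'(x)$, a standard argument (Taylor expansion with integral remainder, or writing $g(y)=\int_0^1 f'(x+t(y-x))\,dt$) shows that $g$ is $C^{k-1}$ whenever $f$ is $C^k$. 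Hence $\Gamma$ is a $C^{k-1}$ application.

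Next I would check the two numerical conditions in Theorem \ref{thm:main_fact}: $\Gamma(0)\neq 0$ and $\Gamma'(0)>0$. From $\Gamma(r)=g(x+r)$ we get $\Gamma(0)=g(x)=f'(x)\neq 0$ by hypothesis. For the derivative, the proof of the earlier lemma already records $g'(x)=f''(x)/2$, so $\Gamma'(0)=g'(x)=f''(x)/2\neq 0$. The sign, however, need not be positive as stated; this is the one genuine subtlety. If $f''(x)/2$ and $f'(x)$ have compatible signs the normalization in the cited lemma's proof (``assume $g$ is positive and increasing'') applies directly; in the remaining cases one passes to $|g|$ exactly as the lemma's proof indicates (``the other configurations for $g$ follow analogously''), which amounts to replacing $\Gamma$ by $-\Gamma$ or reflecting $r\mapsto -r$, and after this reduction one does have $\Gamma(0)\neq 0$ with the correct positivity of the one-sided derivative of the effective $\Gamma$. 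So the step I expect to require the most care is not any computation but this bookkeeping: matching the orientation conventions of Definition \ref{def:lagrange} and Theorem \ref{thm:main_fact} with the sign cases of $f'(x)$ and $f''(x)$, so that the hypothesis $\Gamma'(0)>0$ is legitimately in force.

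With the hypotheses of Theorem \ref{thm:main_fact} verified for a $C^{k-1}$ function $\Gamma$, that theorem immediately yields that $\Pi_x^f$ is a $C^{(k-1)}$ diffeomorphism on some interval $(0,\varepsilon_0)$ — note the drop from $C^k$ to $C^{k-1}$ is exactly the loss incurred in passing from $f$ to its divided difference $\Gamma$ — and that on this interval the defining relation $\varepsilon=\delta\,\Gamma(\delta)=\Delta(\delta)$ characterizes $\delta=\Pi_x^f(\varepsilon)$. Finally, to phrase the conclusion ``in a neighborhood of $x$'' as stated, I would remark that continuity of $\Pi_f$ in both variables (Theorem \ref{thm:conclusao}, or its one-dimensional corollary) together with $\Pi_x^f(\varepsilon)\to 0$ as $\varepsilon\to 0^+$ lets us choose $\varepsilon_0$ uniformly for $x$ ranging in a small neighborhood, so that the diffeomorphism statement and the functional equation $\varepsilon=\delta\Gamma(\delta)$ hold on that neighborhood. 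This last point is essentially a routine compactness/continuity remark and carries no real difficulty; the whole corollary is, in effect, ``lemma $+$ Theorem \ref{thm:main_fact}, read off carefully.''
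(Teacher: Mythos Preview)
Your proposal is correct and follows exactly the route the paper intends: the paper gives no separate proof for this corollary, merely prefacing it with ``Last theorem ensures the following important result,'' so the argument is precisely ``previous lemma $+$ Theorem~\ref{thm:main_fact}.'' You supply the details the paper omits --- in particular the $C^{k-1}$ regularity of $\Gamma$ via the integral representation $g(y)=\int_0^1 f'(x+t(y-x))\,dt$, and the sign bookkeeping ensuring $\Gamma'(0)>0$ in every configuration of $\operatorname{sgn} f'(x)$ and $\operatorname{sgn} f''(x)$ --- and these are genuine gaps in the paper's exposition that you fill correctly. Your final paragraph about uniformity ``in a neighborhood of $x$'' reads more into the phrasing than the paper intends (the statement is really about $\Pi_x^f$ on an interval $(0,\varepsilon_0)$ for the fixed $x$, mirroring Theorem~\ref{thm:main_fact}), so that step is unnecessary, but harmless.
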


Observe that last corollary cannot be applied for the two parameter continuity function $\Pi_f$, since there is no hope for $\Pi_f$ to be a diffeomorphism due the distinct topological dimensions of the domain and the image. It is also important to remark that the regularity gain is local. We know that $\Pi_f$ is globally continuous, but Theorem \ref{thm:main_fact} is of local nature --- that is, we obtain that $\Pi_x^f$ is a $C^k$ diffeomorphism in a open set which may be properly contained in its domain.

Let us now focus in Equation \eqref{eq:cruz}, which is the crux of the matter. To calculate $\delta=\Pi_f(x,\varepsilon)$ for a certain function $f$, we have to solve the equation $\varepsilon=\delta \Gamma(\delta)$. If $f$ is a real function, acting on one real variable, we have that \eqref{eq:cruz} reduces itself to
\begin{equation}
\label{eq:cruz_matter}
    \varepsilon =  \delta \max_{0 < | x -y| <\delta} \frac{|f(x)-f(y)|}{|x-y|}.
\end{equation}

Recall that for \eqref{eq:cruz_matter} to be valid, we need $f: M_1 \subset \mathbb{R} \rightarrow M_2 \subset \mathbb{R}$ to satisfy the Lagrange Propriety at $x$ in such a way that $\Gamma(0) \not = 0$ and $\Gamma'(0) > 0$. This set of hypotheses composes our basic \emph{theoretical assumptions} on $f$ from now on. It is important to keep in mind that these premises are satisfied for all $C^2$ functions defined on open subsets such that $f'(x)f''(x) \not =0$.

To proof that \eqref{eq:cruz_matter} is computable, there are two tasks to be done. The first one is to find out $\Delta$, while the second one is to solve $\varepsilon  = \Delta(\delta)$ for $\delta$. However, to show that \eqref{eq:cruz_matter} is computable, the basic theoretical assumptions are not enough. We need extra \emph{Turing assumptions} on $f$ for correct implementation, and those requirements will be described on time, as we make use of them.

There are numerous combinations of algorithms that are able to handle the tasks proposed, each one with its own hypotheses, convergence speed and error control techniques. Depending of the particular properties of the given function, one may be more suitable than the other.

We now propose an easy to implement, divide--and--conquer algorithm that computes \eqref{eq:cruz_matter} for a significantly large class of functions. To begin with, let $x$ and $\varepsilon$ be the parameters in which we are interested in. Before this discussion, let us fix some definitions and notations that are constantly used from this point on.

\begin{definition}
\label{def:leibniz}
Let $f$ be a real function defined on some subset of the real line.  Define the \emph{Leibniz ratio} of $f$ around $x$ as
\[
    \mathcal{L}_xf(y) =  \frac{|f(x)-f(y)|}{|x-y|}
\] for all $x$ and $y$ where the previous formula is well defined. When there is no possibility of confusion, we drop the subscript that indicates the point $x$.
\end{definition}

\begin{definition}
A real function $f$ of one real variable is called \emph{unimodal} if there is one single local maximum value for $f$.
\end{definition}

In the light of the Definition \ref{def:leibniz}, \eqref{eq:cruz_matter} is now reduced to
\[
    \varepsilon =  \delta \max_{0 < | x -y| <\delta} \mathcal{L}_x f(y) = \Delta(\delta).
\]

Our first objective now is to prove that $\Delta$ is computable. For this, we will assume that $\mathcal{L}_x$ is an unimodal and Lipschitz continuous function. This new set of hypotheses will make our \emph{Turing assumptions} on f. As before, let us further investigate sufficient requirements on $f$ for $\mathcal{L}_x$ to fit the aforementioned hypothesis.

\begin{lemma}
\label{lem:lipschitz}
Let $U \subset \mathbb{R}$ be an open set and $K \subset U$ be a compact interval. If $f: U \rightarrow \mathbb{R}$ is a $C^2$ function and $x \in K$, then there is a constant $M= M(f,K)$ such that
\[
    |\mathcal{L}_x f(y) - \mathcal{L}_x f(z)| \leq M  |y-z|,
\]
for all $y,z \in K\setminus \{ x\}$.
\end{lemma}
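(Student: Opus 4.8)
The plan is to reduce the statement to a standard Lipschitz estimate for a $C^1$ function on a compact set. First I would recall that, since $f \in C^2(U)$ and $K$ is a compact interval, the divided-difference function
\[
    g(x,y) = \begin{cases} \dfrac{f(y)-f(x)}{y-x}, & y \neq x, \\[6pt] f'(x), & y = x, \end{cases}
\]
is of class $C^1$ on the open set $U \times U$ (this is the classical fact that divided differences of a $C^k$ map are $C^{k-1}$; it follows from the integral representation $g(x,y) = \int_0^1 f'(x + t(y-x))\,dt$, which one can differentiate under the integral sign). Consequently $g$ and both of its partial derivatives are continuous, hence bounded, on the compact set $K \times K$; call $M_0$ a common bound for $|\partial_x g|$ and $|\partial_y g|$ there. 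Note that $\mathcal{L}_x f(y) = |g(x,y)|$ for $y \neq x$.

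Next I would fix $x \in K$ and estimate, for $y, z \in K \setminus \{x\}$,
\[
    |\mathcal{L}_x f(y) - \mathcal{L}_x f(z)| = \bigl| \, |g(x,y)| - |g(x,z)| \, \bigr| \leq |g(x,y) - g(x,z)|,
\]
using the reverse triangle inequality $\bigl||a|-|b|\bigr| \le |a-b|$. Then, since $K$ is an interval, the segment joining $(x,y)$ to $(x,z)$ lies in $K \times K$, and the Mean Value Theorem (or the fundamental theorem of calculus applied to $t \mapsto g(x, z + t(y-z))$) gives
\[
    |g(x,y) - g(x,z)| = \left| \int_0^1 \partial_y g\bigl(x, z + t(y-z)\bigr)\,(y-z)\, dt \right| \leq M_0\, |y - z|.
\]
Taking $M = M_0 = M(f,K)$, which depends only on $f$ and $K$ and not on the chosen $x$, yields the claimed inequality for all $y,z \in K \setminus \{x\}$.

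The only genuinely non-routine point is the regularity claim that $g$ extends to a $C^1$ function across the diagonal $y = x$; everything afterwards is the triangle inequality plus a one-dimensional mean value estimate on a convex set. I would therefore spend the bulk of the write-up justifying the $C^1$ smoothness of $g$ — most cleanly via the representation $g(x,y) = \int_0^1 f'(x+t(y-x))\,dt$, valid for all $(x,y) \in U \times U$ including $y=x$, together with differentiation under the integral sign (legitimate because $f' \in C^1$ and the integrand and its partials are continuous in all variables on compact subsets). An alternative, if one prefers to avoid the integral formula, is to bound $\mathcal{L}_x f(y) - \mathcal{L}_x f(z)$ directly by applying the Mean Value Theorem to $f$ on suitable intervals and carefully handling the case where $x$ lies between $y$ and $z$; but the integral-representation route is shorter and makes the uniformity of $M$ in $x$ transparent.
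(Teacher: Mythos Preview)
Your proof is correct and follows essentially the same route as the paper: define the divided difference $g$ (extended to the diagonal by $f'$), note that $\mathcal{L}_x f = |g|$, bound the derivative of $g$ on the compact set, and combine the reverse triangle inequality with the Mean Value Theorem. Your version is a bit more careful than the paper's in that it justifies the $C^1$ regularity of $g$ via the integral representation $g(x,y)=\int_0^1 f'(x+t(y-x))\,dt$ and, by treating $g$ as a function of both variables, makes the uniformity of $M$ in $x$ explicit; the paper simply asserts $g\in C^1$ and works with $x$ fixed.
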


\begin{proof}
Consider the auxiliary function given by
\[
    g(y) =
    \begin{cases}
        \displaystyle \frac{f(y) - f(x)}{y-x}, & \text{ if } y \not = x
    \\[10pt]
        f'(x), &\text { if } y = x
    \end{cases}
\]

It is easy to see that $g \in C^1$ and that $\mathcal{L}_x f(y) = | g(y) |$. Since $K$ is a compact set, let $M$ be the maximum of $|g'|$ over it. Using the Reverse Triangular Inequality and the Mean Value Theorem, we obtain
\[
    |\mathcal{L}_x f(y) - \mathcal{L}_x f(z)| = | | g(y)| - |g(z) | | \leq | g(y) - g(z) | \leq  M  |y-z|.
\]

The last inequality completes the proof of this lemma.
\qquad \end{proof}

\begin{definition}
Let $f: U \rightarrow \mathbb{R}$ be a differentiable function defined on some open subset of the real line and fix some $x \in U$.  The function $f$ will be called of \emph{transversal type} at $x$ if the equation
\[
    f'(y) = \frac{f(y) - f(x)}{y - x}
\]
has only a finite number of solutions.
\end{definition}

The previous definition states a geometrical imposition. Shortly, it says that the secant passing through $(x,f(x))$ and $(y,f(y))$ can match the tangent at $(y,f(y))$ for at most a finite number of $y$'s. For instance, an affine function is not of transversal type at any point. The importance of such requirement is elucidated by the next result.

\begin{lemma}
Let $f: U \rightarrow \mathbb{R}$ be a $C^2$ function of transversal type at $x$. Under these conditions, $\mathcal{L}_x f$ is unimodal at a certain closed non-degenerated neighborhood of $x$.
\end{lemma}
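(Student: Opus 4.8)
The plan is to use the auxiliary function $g$ introduced in Lemma \ref{lem:lipschitz}, namely the extended Leibniz quotient
\[
    g(y) = \begin{cases} \dfrac{f(y)-f(x)}{y-x}, & y \neq x, \\[8pt] f'(x), & y = x, \end{cases}
\]
so that $\mathcal{L}_x f(y) = |g(y)|$ and $g \in C^1$ near $x$. Unimodality of $\mathcal{L}_x f$ on a closed non-degenerate neighborhood of $x$ will follow from showing that $|g|$ has a single local maximum there, and for this the key is to control the critical points of $g$. First I would compute $g'(y)$ for $y \neq x$ by the quotient rule: a short calculation gives
\[
    g'(y) = \frac{f'(y)(y-x) - (f(y)-f(x))}{(y-x)^2} = \frac{f'(y) - g(y)}{y-x}.
\]
Hence $g'(y) = 0$ if and only if $f'(y) = \dfrac{f(y)-f(x)}{y-x}$, which is precisely the transversal-type equation. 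By hypothesis this equation has only finitely many solutions, so $g'$ vanishes at only finitely many points; consequently there is a closed non-degenerate interval $I = [x-\rho, x+\rho]$ around $x$ on which $g'$ has no zero other than possibly $x$ itself. On each of the two half-intervals $[x-\rho,x]$ and $[x,x+\rho]$, $g'$ has constant sign (by continuity of $g'$ and the intermediate value theorem, since it does not vanish in the interior), so $g$ is strictly monotone on each half.

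Next I would pass from monotonicity of $g$ to unimodality of $|g|$. Here I would split into cases according to the sign of $g(x) = f'(x)$. If $g(x) \neq 0$ — which one can arrange by shrinking $I$ if necessary, unless $f'(x) = 0$, a degenerate case I would handle separately or exclude via the implicit transversality setup — then by continuity $g$ keeps a constant sign on a possibly smaller interval, so $|g| = g$ or $|g| = -g$ there, and a function that is strictly monotone on $[x-\rho',x]$ and strictly monotone on $[x,x+\rho']$ with these two monotonicities meeting at $x$ is unimodal: its unique local maximum is attained either at $x$ or at one of the endpoints, depending on the two monotonicity directions. In every one of the finitely many sign patterns for the pair (monotonicity on the left, monotonicity on the right) the graph of $|g|$ rises then falls, or is monotone, on $I$, which is exactly the statement that $\mathcal{L}_x f$ is unimodal on that closed non-degenerate neighborhood.

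The main obstacle I anticipate is the bookkeeping near $y = x$ itself: $g'$ is defined there only as a limit (it equals $f''(x)/2$ by the argument already used in the earlier lemma), and one must make sure that the "strictly monotone on each side, glued at $x$" picture genuinely produces a single local maximum rather than, say, a local minimum at $x$ flanked by two increasing branches — which would give two maxima at the endpoints and violate unimodality on that neighborhood. The transversal-type hypothesis is exactly what rules out pathological accumulation of critical points, but to pin down the correct side-monotonicities one really needs the sign of $f'(x)$ (equivalently $g(x)$) together with the behavior of $g$ — and the case $f'(x)=0$, where $|g|$ has a genuine local minimum at $x$, shows the statement must be read as unimodality on a neighborhood where this does not happen, so I would be careful to state which neighborhood is selected. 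Modulo this careful choice of the interval $I$, the remaining steps are the routine sign-chasing sketched above.
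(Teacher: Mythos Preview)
Your approach is essentially the same as the paper's: introduce the auxiliary $g$ with $\mathcal{L}_x f = |g|$, use the transversal-type hypothesis to conclude that $g'$ has only finitely many zeros, and then pick a closed interval around $x$ avoiding those zeros so that $g$ is monotone on each side. The paper's proof is in fact a briefer sketch than yours and does not address the $f'(x)=0$ subtlety you flag; that concern is legitimate, but it is moot in the paper's intended applications, where $f'(x)f''(x)\neq 0$ (hence $g(x)\neq 0$) is always in force.
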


\begin{proof}
Let $g$ be as in the proof of Lemma \ref{lem:lipschitz}. If we show that $g$ has a finite number of extrema points, we can isolate them in neighborhoods and complete the proof. Since $g$ is a $C^1$ mapping, it is easy to see that $g'$ has a finite number of roots, once $f$ is of transversal type at $x$. Hence $g$ has at most a finite number of critical points.
\qquad \end{proof}

Putting it all together, we achieve the following result.

\begin{theorem}
Let $f: U \rightarrow \mathbb{R}$ be a $C^2$ function of transversal type at $x$. If $f'(x) f'' (x) \not = 0$, then $f$ satisfies both the theoretical and Turing assumptions in a neighborhood of $x$.
\end{theorem}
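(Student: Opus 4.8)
The plan is to assemble the pieces we have already proven, since the theorem is essentially a bookkeeping statement: it asserts that the stated analytic hypotheses on $f$ (namely $f \in C^2$, transversal type at $x$, and $f'(x)f''(x) \neq 0$) simultaneously imply the \emph{theoretical assumptions} (the Lagrange Propriety at $x$ together with $\Gamma(0) \neq 0$ and $\Gamma'(0) > 0$) and the \emph{Turing assumptions} (that $\mathcal{L}_x f$ is unimodal and Lipschitz continuous near $x$), all on a common neighborhood of $x$. First I would invoke the lemma that a $C^2$ function with $f'(x)f''(x) \neq 0$ satisfies the Lagrange Propriety at $x$; tracing through its proof, the function $\Gamma$ was constructed as $\Gamma(r) = g(x+r)$ where $g$ is the $C^1$ extension of the difference quotient, so that $\Gamma(0) = g(x) = f'(x) \neq 0$ and $\Gamma'(0) = g'(x) = f''(x)/2$. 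Up to the sign normalization made in that proof (``without loss of generality $g$ positive and increasing''), this yields $\Gamma'(0) > 0$, which is exactly the hypothesis needed to apply Theorem \ref{thm:main_fact}. Hence the theoretical assumptions hold on some interval $(0, \zeta_1)$.

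Next I would handle the Turing assumptions. Lemma \ref{lem:lipschitz} gives, for any compact interval $K \subset U$ containing $x$, a constant $M = M(f,K)$ with $|\mathcal{L}_x f(y) - \mathcal{L}_x f(z)| \leq M|y-z|$ for $y,z \in K \setminus \{x\}$; choosing $K$ to be a closed neighborhood of $x$ inside $U$ supplies Lipschitz continuity of $\mathcal{L}_x f$ there. For unimodality, I would apply the lemma stating that a $C^2$ function of transversal type at $x$ has $\mathcal{L}_x f$ unimodal on a closed non-degenerate neighborhood of $x$; this is where the transversal-type hypothesis is consumed, since without it $g'$ could vanish on a set with accumulation points and the single-local-maximum condition would fail. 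Call the resulting neighborhood interval $(0, \zeta_2)$.

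Finally I would intersect the neighborhoods: take $\zeta = \min\{\zeta_1, \zeta_2\}$ (and shrink further if needed so that the whole interval lies in $U$ and inside the compact $K$ used for the Lipschitz estimate). On $(0,\zeta)$ all four properties — Lagrange Propriety, $\Gamma(0) \neq 0$, $\Gamma'(0) > 0$, and $\mathcal{L}_x f$ unimodal and Lipschitz — hold simultaneously, which is precisely the assertion. I do not expect any genuine obstacle here; the only point requiring care is making sure the ``without loss of generality'' sign choice in the Lagrange-Propriety lemma is compatible with the sign convention $\Gamma'(0) > 0$ demanded by Theorem \ref{thm:main_fact} — if $f'(x)$ and $f''(x)$ have opposite signs one may need to observe that $\Gamma$ can be replaced by its reflection, or simply note that the supremum defining $\Gamma(r)$ is intrinsically nonnegative so the relevant monotonicity is automatic for small $r > 0$. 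Beyond that, the proof is just the conjunction of previously established lemmas on a common domain.
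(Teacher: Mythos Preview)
Your proposal is correct and follows exactly the approach the paper takes: the paper presents this theorem with the preface ``Putting it all together, we achieve the following result'' and gives no separate proof, treating it as the direct conjunction of the preceding lemmas (Lagrange Propriety from $f'(x)f''(x)\neq 0$, Lipschitz continuity of $\mathcal{L}_x f$ on compact intervals, and unimodality from the transversal-type hypothesis). Your write-up is in fact more careful than the paper's, since you explicitly address the intersection of neighborhoods and flag the sign convention for $\Gamma'(0)$, which the paper glosses over.
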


From our previous discussion, assume that $\mathcal{L}f$ is an unimodal Lipschitz function with Lipschitz constant $M>0$. Let us begin with a standard ternary search to find the maximum value of $\mathcal{L}f$ and consequently prove that $\Delta$ is computable. Assume that  $\omega_\Delta$ is the desired precision. We finish the proof if there is an algorithm, which output will be designed by $\tilde{\Delta}$, such that
\[
    |\tilde{\Delta}(\delta) - \Delta(\delta)| < \omega_\Delta,
\] for all meaningful $\delta$.

Here follows an outline for $\tilde{\Delta}$. Recall that unimodality ensures the existence and uniqueness of a single maximum value in its domain. Since $\mathcal{L}f$ is an unimodal function by hypotheses, we know that the only maximum of the function lies in a certain $[a,b]$. Let $y^*$ be a point in $[a,b]$ such that $\mathcal{L}f(y^*)$ reaches its unique maximum value.

Assume that $\omega_{\operatorname{sup}}$ denotes a constant that handle the ternary search precision. The value of this constant will be properly chosen in function of $\omega_\Delta$.

At this point, consider successively smaller refinements of the interval $[a,b]$, which will be called $[a_j, b_j]$, such that $y^* \in [a_j, b_j] \subset [a_{j-1}, b_{j-1}] \subset [a,b]$ and $|b_j - a_j| \leq \frac{2}{3} |b_{j-1} - a_{j-1}|.$  Once $[a_j, b_j]$ is sufficiently small, we may approximate $y^*$ by one of the interval extreme points. Consider a refinement such that $|b_j - a_j| < \omega_{\operatorname{sup}}/ M$. Assume that we are choosing $\xi \in [a_j, b_j]$ as an approximation for the maximum point. Since $\xi$ and $y^*$ lies in the same set $[a_j, b_j]$, then
\[
    |\mathcal{L}f(y^*) - \mathcal{L}f(\xi)| \leq  M |y^* - \xi| \leq M |b_j - a_j | \leq \omega_{\operatorname{sup}}.
\]

\begin{algorithm}[ht]
\label{alg:ternary}
\KwData{$\mathcal{L}f$, $\omega_{\operatorname{sup}}$, $M$, $a$, $b$}
\KwResult{$\sup_{[a,b]} \mathcal{L}f$, with precision $\omega_{\operatorname{sup}}$}
initialization point\;
\eIf{$|a-b|<\omega_{\operatorname{sup}}/ M$}
{
    \tcc{there is no need for extra interval refinement}
    return $\max\{\mathcal{L}f(a), \mathcal{L}f(b)\}$\;
}
{
    \tcc{pick 2 equidistant points marking 1/3 of the interval}
    set $p = a + (b - a) / 3$\;
    set $q = b - (b - a) / 3$\;
    set $\gamma_p= \mathcal{L}f(p)$\;
    set $\gamma_q = \mathcal{L}f(q)$\;
    \tcc{since $\mathcal{L}f$ is unimodal, the following applies}
    \uIf{$\gamma_p < \gamma_q$}
    {
        \tcc{the maximum point must be in $[q, b]$}
        restart with $a= q$ unless exceeded the number of iterations\;
    }
    {
    \uElseIf{$\gamma_p > \gamma_q$}
    {
        \tcc{the maximum point must be in $[a, p]$}
        restart with $b= p$ unless exceeded the number of iterations\;
    }
    \Else{
        \tcc{the maximum point must be in $[p, q]$}
         restart with $a= p$ and $b=q$ unless exceeded the number of iterations\;
    }
    }
}
\caption{Ternary search for $\Gamma$ computation}
\end{algorithm}

It is easy to see that Algorithm \ref{alg:ternary} (cf. page \pageref{alg:ternary}) enable us to calculate $\tilde{\Delta}(\delta)$ by invoking it at the interval $[x - \delta, x + \delta]$ and multiplying its output by $\delta$. Also note that this procedure has runtime order $\Theta(\log n)$. Because of the final output multiplication, we get that
\[
    |\tilde{\Delta}(\delta) - \Delta(\delta)| < \delta \omega_{\operatorname{sup}}.
\]

Since we need $\delta \omega_{\operatorname{sup}} \leq \omega_\Delta$ to obtain $|\tilde{\Delta}(\delta) - \Delta(\delta)| < \omega_\Delta$, we must have some control over $\delta$ so we can properly choose $\omega_{\operatorname{sup}}$. To begin this discussion we state the following corollary from Theorem \ref{thm:main_fact}.

\begin{corollary}
\label{cor:interval}
Suppose that $\mathcal{B}_1$ and $\mathcal{B}_2$ are Banach spaces. In addition to Theorem \ref{thm:main_fact} hypothesis, suppose that $M_1$ is an open set of $\mathcal{B}_1$ and also assume that $M_2 \subset \mathcal{B}_2$. If $f$ is differentiable and $L$ is the maximum value of function $t\mapsto\|f'(t)\|_{\mathcal{L}(B_1,B_2)}$, then \
\begin{equation*}
    \frac{\varepsilon}{L} \leq \Pi_x^f(\varepsilon) \leq \frac{\varepsilon}{\Gamma(0)}
\end{equation*}  for all sufficiently small $\varepsilon$. If $t\mapsto \|f'(t)\|_{\mathcal{L}(B_1,B_2)}$ does not reach a maximum value, then the first inequality is reduced to $0 \leq \Pi_x^f(\varepsilon)$.
\end{corollary}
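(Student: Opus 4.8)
The plan is to read off both inequalities directly from the defining identity for the continuity function supplied by Theorem~\ref{thm:main_fact}, combined with two elementary estimates on $\Gamma$. First I would fix $\varepsilon>0$ small enough that Theorem~\ref{thm:main_fact} is in force and set $\delta:=\Pi_x^f(\varepsilon)$. Since $\Pi_x^f$ is a diffeomorphism on $(0,\varepsilon_0)$ and, by that theorem, $\varepsilon=\Delta(\delta')$ forces $\delta'=\Pi_x^f(\varepsilon)$, the value $\delta$ must satisfy the crucial relation
\[
    \varepsilon=\delta\,\Gamma(\delta).
\]
Everything that remains is to sandwich $\Gamma(\delta)$ between $\Gamma(0)$ and $L$ and divide.

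For the upper estimate $\Pi_x^f(\varepsilon)\le\varepsilon/\Gamma(0)$ I would note that $r\mapsto B_{M_1}[x,r]$ is increasing, so $\Gamma$, being the supremum of a fixed family of nonnegative quotients over that ball, is nondecreasing on $(0,\zeta)$. Because $\Gamma$ is $C^1$ across $0$, continuity together with monotonicity gives $\Gamma(0)=\lim_{r\to 0^+}\Gamma(r)=\inf_{r\in(0,\zeta)}\Gamma(r)$; combined with $\Gamma\ge 0$ and the hypothesis $\Gamma(0)\ne 0$ this yields $0<\Gamma(0)\le\Gamma(\delta)$, whence $\delta=\varepsilon/\Gamma(\delta)\le\varepsilon/\Gamma(0)$. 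The same computation also shows $\delta\to 0$ as $\varepsilon\to 0$, a fact I will reuse below.

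For the lower estimate I would exploit that $M_1$ is open in $\mathcal{B}_1$: for all sufficiently small $r$ the closed ball $B_{M_1}[x,r]$ is contained in $M_1$ and, being a ball in a Banach space, is convex, so the segment joining $x$ to any $y\in B_{M_1}[x,r]$ lies in $M_1$. The mean value inequality for differentiable maps between Banach spaces then gives
\[
    d_2\big(f(x),f(y)\big)=\|f(x)-f(y)\|\le\Big(\sup_{t\in[x,y]}\|f'(t)\|_{\mathcal{L}(\mathcal{B}_1,\mathcal{B}_2)}\Big)\,\|x-y\|\le L\,d_1(x,y),
\]
and taking the supremum over $y\in B_{M_1}[x,r]\setminus\{x\}$ yields $\Gamma(r)\le L$ for every small $r$. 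Shrinking $\varepsilon$ if necessary so that $\delta=\Pi_x^f(\varepsilon)$ lands in that range — legitimate since $\delta\le\varepsilon/\Gamma(0)$ — we obtain $\Gamma(\delta)\le L$, and therefore $\delta=\varepsilon/\Gamma(\delta)\ge\varepsilon/L$. When $t\mapsto\|f'(t)\|_{\mathcal{L}(\mathcal{B}_1,\mathcal{B}_2)}$ admits no maximum one interprets $L=+\infty$, so $\varepsilon/L=0$ and the bound degenerates to the trivial $0\le\Pi_x^f(\varepsilon)$; if the norm is merely bounded without attaining its supremum, the identical argument with $L$ replaced by that supremum still applies.

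The only step I expect to demand some care is this last one: one must verify that for small radii the closed ball genuinely sits inside the open set $M_1$, so that convexity of balls in a Banach space makes the Banach-valued mean value inequality usable on the whole segment $[x,y]$. Apart from that technical verification, the argument is pure bookkeeping around the identity $\varepsilon=\delta\,\Gamma(\delta)$.
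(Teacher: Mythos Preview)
Your argument is correct and follows essentially the same approach as the paper: monotonicity of $\Gamma$ together with the identity $\varepsilon=\delta\,\Gamma(\delta)$ for the upper bound, and the Mean Value Inequality for the lower bound. The only cosmetic difference is that for the lower estimate the paper applies the Mean Value Theorem directly to conclude that $\varepsilon/L$ is a suitable $\delta$ and then invokes the maximality of $\Pi_x^f$, whereas you route the same inequality through the bound $\Gamma(\delta)\le L$ and the identity $\varepsilon=\delta\,\Gamma(\delta)$; both yield the same conclusion with the same ingredients.
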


\begin{proof}
The first inequality is a trivial consequence of the Mean Value Theorem together with the fact that $\Pi_x^f$ provides the maximum suitable number for the $(x,\varepsilon)$ parameters. To proof the second one, note that $\Gamma$ is a monotone application for all sufficiently small $\delta$. Hence $\Gamma(0) \leq \Gamma(\delta)$ and
\[
\Pi_x^f(\varepsilon) = \frac{\varepsilon}{\Gamma(\delta)} \leq \frac{\varepsilon}{\Gamma(0)},
\]
where $\delta = \Pi_x^f(\varepsilon)$.
\qquad \end{proof}

Recalling our main discussion, this last result ensures that if we choose
\[
    \omega_{\operatorname{sup}} < \omega_\Delta \frac{\Gamma(0)}{\varepsilon},
\]
then we have that $|\tilde{\Delta}(\delta) - \Delta(\delta)| < \omega_\Delta$, which completes the computability proof of $\Delta$.

In the particular case of real functions of one real variable, it is easy to calculate $\Gamma(0)$. For instance, assume that $f$ is differentiable. Then for all $s>0$, by the Mean Value Theorem, there is a  $\theta_s$ such that $| \theta_s - x| < s$ and $\mathcal{L}f(s) = |f'(\theta_s)|$. Making $s \rightarrow 0$ we get that \[\mathcal{L}f(0) = |f'(x)|.\] The same reasoning applies for $\Gamma(0)$.

Since $\Delta$ is now computable, we are in the conditions to solve the equation $\varepsilon = \Delta(\delta)$ for $\delta$. By employing Bolzano's Theorem, we use a binary search to look for the solution inside the interval $[a,b]$ settled by Corollary \ref{cor:interval}.

To begin with, recall that $\Delta = \Pi_x^{-1}$ is a homeomorphism. This allow us to denote by $\delta^*$ the unique solution of $\varepsilon = \Delta(\delta)$ that lies inside $[a,b]$. As done before, we proceed recursively, creating successively smaller refinements of the interval $[a,b]$, which are called $[a_j, b_j]$, such that $\delta^* \in [a_j, b_j] \subset [a_{j-1}, b_{j-1}] \subset [a,b]$ and $|b_j - a_j| \leq \frac{1}{2} |b_{j-1} - a_{j-1}|$.

Again, once $[a_j, b_j]$ is sufficiently small, we may approximate $\delta^*$ by one of the interval extreme points. Let $\omega_{\operatorname{sol}}$ be the given precision for the solution finding algorithm. Consider a refinement such that $|b_j - a_j| < \omega_{\operatorname{sol}}$. Assume that $\varsigma \in [a_j, b_j]$ is an approximation for the solution. Since $\varsigma$ and $\delta^*$ lies in the same set $[a_j, b_j]$, then $|\varsigma - \delta^*| < |b_j - a_j| < \omega_{\operatorname{sol}}.$

Now consider the pseudocode written bellow.

\begin{algorithm}[H]
\label{alg:binary}
\KwData{$\Delta$, $\varepsilon$, $\omega_{\operatorname{sol}}$, $a$, $b$}
\KwResult{$\delta$ such that $\Delta(\delta) = \varepsilon$, with precision $\omega_{\operatorname{sol}}$}
initialization point\;
\eIf{$|a-b|<\omega_{\operatorname{sol}}$}
{
    \tcc{there is no need for extra interval refinement}
    \eIf{$|\Delta(a) - \varepsilon| < |\Delta(b) - \varepsilon|$}
    {
        \tcc{$a$ is a better approximation than $b$}
        return $a$\;
    }
    {
        return $b$\;
    }
}
{
    \tcc{Since $\Delta$ is continuous, we use Bolzano's Theorem}
    set $m = (a + b)/2$\;
    set $\gamma_a = \Delta(a)$\;
    set $\gamma_b = \Delta(b)$\;
    set $\gamma_m = \Delta(m)$\;
    \eIf{$(\gamma_a - \varepsilon) (\gamma_m - \varepsilon)\leq0$}
    {
        \tcc{the solution must be in $[a,m]$}
         restart with $b= m$ unless exceeded the number of iterations\;
    }
    {
        \tcc{the solution must be in $[m,b]$}
        restart with $a= m$ unless exceeded the number of iterations\;
    }
}
\caption{Binary search for $\Delta(\delta)=\varepsilon$ solution }
\end{algorithm}

It is easy to see that Algorithm \ref{alg:binary} has runtime order $\Theta(\log n)$. Together with the first step, we managed to build an algorithm, with polylogarithm runtime order, for solving the $\varepsilon$--$\delta$ determination problem.

Note that the error is controlled by two independent parameters $\omega_\Delta$ and $\omega_{\operatorname{sol}}$. It is important to mention that for the fully precision control of this method, we need to calculate $M$ and $L$ for each function we are analyzing.

It is also important to note that $\omega_\Delta$ must be significantly smaller than $\omega_{\operatorname{sol}}$ for the composed algorithm to work properly. Putting it all together, we managed to prove the following couple results.

\begin{theorem}[Local Smoothness and Computability]
\label{thm:computability}
Let $f: U  \subset \mathbb{R} \rightarrow \mathbb{R}$ be a $C^k$ function, $k \geq 2$, of transversal type at $x$. If $f'(x) f'' (x) \not = 0$, then there is a function $\Pi_x^f: (0, \varepsilon_0) \rightarrow \mathbb{R}^+$ such that
\begin{enumerate}
    \item[(i)] $\Pi_x^f$ is a $C^{k-1}$ diffeomorphism over its image;
    \item[(ii)] $\Pi_x^f$ is a  monotonically increasing function;
    \item[(iii)] if $\delta = \Pi_x^f(\varepsilon)$ and $|x-y| < \delta$, then $|f(x) - f(y)| < \varepsilon$;
    \item[(iv)] $\Pi_x^f$ provides the maximum $\delta$ for which (iii) is valid;
    \item[(v)] $\Pi_x^f$ is computable, provided that $f$ and $x$ are computable.
\end{enumerate}
\end{theorem}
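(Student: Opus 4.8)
The plan is to read Theorem \ref{thm:computability} as a synthesis of the results already established in this section: items (i)--(iv) will follow by bookkeeping from the corollary to Theorem \ref{thm:main_fact} and from the definition of the continuity function, while item (v) amounts to chaining together Algorithms \ref{alg:ternary} and \ref{alg:binary} and tracking the precision parameters. First I would dispatch (i) and (ii). Since $f$ is $C^k$ with $k\ge 2$ and $f'(x)f''(x)\neq 0$, the corollary to Theorem \ref{thm:main_fact} directly supplies an $\varepsilon_0>0$ and the map $\Pi_x^f\colon(0,\varepsilon_0)\to\mathbb R^+$ as a $C^{k-1}$ diffeomorphism onto its image; concretely, writing $\Gamma(r)=|g(x\pm r)|$ for the difference quotient $g$ of the proof of Lemma \ref{lem:lipschitz}, one notes that $g$ is $C^{k-1}$, that $\Gamma(0)=|f'(x)|\neq 0$, and that the sign bookkeeping in that lemma forces $\Gamma'(0)>0$ after possibly shrinking the neighborhood, so Theorem \ref{thm:main_fact} applies and gives (i). For (ii) I would use that the same theorem identifies $\Pi_x^f$ with the inverse of $\Delta(\delta)=\delta\Gamma(\delta)$; since $\Delta'(\delta)=\Gamma(\delta)+\delta\Gamma'(\delta)\to\Gamma(0)>0$ as $\delta\to 0^+$, the map $\Delta$ is strictly increasing near $0$, hence so is $\Pi_x^f$ on $(0,\varepsilon_0)$ (shrinking $\varepsilon_0$ if needed).

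Items (iii) and (iv) I would read straight off the definition of the continuity function. Because $f'(x)\neq 0$, $f$ is non-constant and continuous, closed balls of $\mathbb R$ are compact, so $E_f(x)\neq\emptyset$; for $\varepsilon$ small one has $\varepsilon\in E_f(x)$, and Theorem \ref{thm:topology} yields $\Delta_{f,x}(\varepsilon)=(0,\delta]$, so that $\Pi_x^f(\varepsilon)=\max\Delta_{f,x}(\varepsilon)$ is well defined. Unwinding the definition of $\Delta_{f,x}(\varepsilon)$, membership of $\delta$ means exactly that $|x-y|<\delta$ implies $|f(x)-f(y)|<\varepsilon$, which is (iii), while the maximality of $\delta$ in $\Delta_{f,x}(\varepsilon)$ is (iv). The consistency of this $\Pi_x^f$ with the one obtained in (i) is precisely the assertion of Theorem \ref{thm:main_fact}, namely that the $\delta$ solving $\varepsilon=\delta\Gamma(\delta)$ coincides with $\max\Delta_{f,x}(\varepsilon)$.

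For item (v) I would chain the two algorithms. As $k\ge 2$, $f$ is $C^2$ of transversal type at $x$ with $f'(x)f''(x)\neq 0$, so by the theorem asserting that such functions satisfy both the theoretical and Turing assumptions, the Leibniz ratio $\mathcal L_x f$ is unimodal and Lipschitz on a closed neighborhood of $x$, with a constant $M$ computable from $f$ via Lemma \ref{lem:lipschitz}. Then Algorithm \ref{alg:ternary}, invoked on $[x-\delta,x+\delta]$ with its output scaled by $\delta$, computes $\tilde\Delta(\delta)$ with $|\tilde\Delta(\delta)-\Delta(\delta)|<\delta\,\omega_{\operatorname{sup}}$ in $\Theta(\log n)$ steps; using the bound $\delta\le\varepsilon/\Gamma(0)$ from Corollary \ref{cor:interval} and the computable value $\Gamma(0)=|f'(x)|$, the choice $\omega_{\operatorname{sup}}<\omega_\Delta\,\Gamma(0)/\varepsilon$ forces $|\tilde\Delta(\delta)-\Delta(\delta)|<\omega_\Delta$. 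Since $\Delta=(\Pi_x^f)^{-1}$ is a homeomorphism with a unique solution of $\varepsilon=\Delta(\delta)$ in the interval $[\varepsilon/L,\varepsilon/\Gamma(0)]$ of Corollary \ref{cor:interval}, Algorithm \ref{alg:binary} then locates that solution to precision $\omega_{\operatorname{sol}}$, again in $\Theta(\log n)$ steps; composing the two searches produces a polylogarithmic procedure for $\Pi_x^f(\varepsilon)$, and since all constants involved ($M$, $L$ and $\Gamma(0)=|f'(x)|$) together with the evaluations of $f$ are computable from the computable data $f$ and $x$, item (v) follows.

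The step I expect to be the main obstacle is the honest error analysis inside (v): Algorithm \ref{alg:binary} is fed the approximation $\tilde\Delta$ rather than $\Delta$ itself, so one must verify that an $\omega_\Delta$-accurate oracle still returns an $\omega_{\operatorname{sol}}$-accurate root. This is where the strict lower bound $\Delta'(\delta)\ge\Gamma(0)/2>0$ near the root is essential: it makes $\Delta$ bi-Lipschitz there, so a perturbation of size $\omega_\Delta$ in the values of $\Delta$ moves the root by at most $2\omega_\Delta/\Gamma(0)$, and choosing $\omega_\Delta$ suitably smaller than $\omega_{\operatorname{sol}}\,\Gamma(0)$ then closes the argument; everything else is routine bookkeeping.
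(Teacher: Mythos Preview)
Your proposal is correct and follows essentially the same approach as the paper: the theorem is presented there as a synthesis (``Putting it all together, we managed to prove the following couple results''), with (i)--(iv) drawn from the corollary to Theorem \ref{thm:main_fact} and the definition of $\Pi_x^f$, and (v) from the chaining of Algorithms \ref{alg:ternary} and \ref{alg:binary} together with the precision discussion around Corollary \ref{cor:interval}. Your final paragraph on the bi-Lipschitz error propagation is in fact more careful than the paper, which simply remarks that $\omega_\Delta$ must be chosen significantly smaller than $\omega_{\operatorname{sol}}$.
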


\begin{theorem}[Non-local Continuity] Let $f: U \subset \mathbb{R} \rightarrow \mathbb{R}$ be a non-constant, continuous function. Then there is an $\Omega \subset U \times \mathbb{R}^+$ and function $\Pi_f: \Omega \rightarrow \mathbb{R}^+$ such that
\begin{enumerate}
    \item[(i)] $\Pi_f$ is a continuous application;
    \item[(ii)] if $\delta = \Pi_f(x,\varepsilon)$ and $|x-y| < \delta$, then $|f(x) - f(y)| < \varepsilon$;
    \item[(iii)] $\Pi_f$ provides the maximum $\delta$ for which (ii) is valid;
    \item[(iv)] $\Omega$ is homeomorphic to the $\varepsilon$--$\delta$ manifold of $f$.
\end{enumerate}
\end{theorem}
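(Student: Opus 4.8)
The plan is to recognize this statement as the global counterpart of Theorem~\ref{thm:computability}, assembled entirely from the material of Section~\ref{sec:theory}. First I would set $\Omega = E_f(U)$ and let $\Pi_f$ be the two parameter continuity function, $\Pi_f(x,\varepsilon) = \Pi_x^f(\varepsilon) = \max \Delta_{f,x}(\varepsilon)$. The first thing to verify is that this is well posed. Since $f$ is continuous and non-constant, the structure theorem for $E_f(x)$ recalled above gives $E_f(x) \neq \emptyset$ for every $x \in U$, so $\Omega \neq \emptyset$; and for each $(x,\varepsilon) \in \Omega$ the set $\Delta_{f,x}(\varepsilon)$ is non-empty and bounded, hence by Theorem~\ref{thm:topology} it equals $(0,\delta]$ for some $\delta > 0$, so the maximum defining $\Pi_f(x,\varepsilon)$ exists.

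Next I would dispatch clauses (ii) and (iii), which are immediate unfoldings of the definitions. If $\delta = \Pi_f(x,\varepsilon) = \max \Delta_{f,x}(\varepsilon)$, then $\delta$ is suitable for the triplet $(f,x,\varepsilon)$; by the definition of suitability this means precisely that $|x-y| < \delta$ implies $|f(x) - f(y)| < \varepsilon$, which is (ii). Maximality in (iii) is simply the assertion that $\delta$ is the largest element of $\Delta_{f,x}(\varepsilon)$, true by construction.

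For (i) I would invoke the corollary to Theorem~\ref{thm:conclusao}: for a non-constant continuous $f$ between subsets of $\mathbb{R}$, the two parameter continuity function is continuous. For (iv) the same corollary asserts that the set $\{(x,\varepsilon,\delta) \in \mathbb{R}^3 : (x,\varepsilon) \in E_f \text{ and } \Pi_f(x,\varepsilon) = \delta\}$, which by definition is the $\varepsilon$--$\delta$ manifold of $f$, is homeomorphic to $E_f = \Omega$. Concretely, the homeomorphism is the graph map $(x,\varepsilon) \mapsto (x,\varepsilon,\Pi_f(x,\varepsilon))$: it is continuous by (i), it is a bijection onto the graph, and its inverse is the restriction of the (continuous) projection $(x,\varepsilon,\delta) \mapsto (x,\varepsilon)$.

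The only step that is not pure bookkeeping is justifying the appeal to Theorem~\ref{thm:continuity} when $U$ is a proper open subset of $\mathbb{R}$, since then the closure in $U$ of a large ball $B_U(x,r)$ need not be compact. I expect this to be the main point to argue carefully: I would handle it by noting that continuity of $\varepsilon \mapsto \Pi_x^f(\varepsilon)$ at a fixed $\varepsilon$ is a local matter in the $x$ variable, so it suffices to work with balls small enough to have compact closure inside $U$ (Heine--Borel), or simply by citing the already established corollary to Theorem~\ref{thm:conclusao}, which records precisely this conclusion for arbitrary $M_1 \subset \mathbb{R}$.
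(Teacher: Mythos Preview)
Your proposal is correct and matches the paper's approach exactly: the paper states this theorem without a separate proof, presenting it as a summary of Section~\ref{sec:theory} (specifically Theorem~\ref{thm:conclusao} and its corollary), and your assembly of clauses (i)--(iv) from those earlier results, via $\Omega = E_f(U)$ and the graph homeomorphism, is precisely what is intended. Your closing remark about the compactness hypothesis when $U \subsetneq \mathbb{R}$ flags a genuine subtlety that the paper's corollary elides, but for present purposes citing that corollary suffices.
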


\section{Final Discussions and Examples}
\label{sec:conclusions}

Since we have already presented the fundamental theory and the main algorithm that are the very soul of this work, we are now under conditions to address some examples in this final section. Nevertheless, it is imperative to stress that the cases  portrayed here makes up only a small portion of the functions for which these techniques applies.

Initially, we present functions from distinct classes of mappings that were selected from the crowd because they feature explicit $\Pi_x^f$ formulas, making it possible to validate the output of the algorithm. After that, we present a final example where the continuity function is not explicitly known, which drives us to check Theorem \ref{thm:computability} hypotheses. 

At this point, a general remark is in order. In the proof of the computability of the continuity function, we manage to prove that $\Pi_x^f$ is computable for sufficiently small $\varepsilon$. However, when the examples portrayed are  compared to their $\Pi_x^f$ formula counterpart, it showcases the accuracy and effectiveness of the algorithm even for larger values of $\varepsilon$.

\begin{example}
\label{ex:exponential}
Exponential functions.
\end{example}

Here we furnish a particular case of an exponential function. Let $\mathbb{R}$ be considered with the standard Euclidean metric and assume that $f:\mathbb{R}\rightarrow \mathbb{R}$ is given by
\[
     f(y)=1-e^{-y}.
\]

Let us initially concentrate our efforts to discuss the continuity function associated to $f$ in a theoretical point of view. By the definitions introduced in Section \ref{sec:theory}, it is not difficult to verify that $\Delta_{f,x}(\varepsilon)=(0,{x}+\ln{(\varepsilon+e^{-{x}})}]$ for any $x\in\mathbb{R}$. Following the procedure to deduce the continuity function, we observe that $E_f(x)=(0,\infty)$ and therefore the continuity function $\Pi_x^f:(0,\infty)\rightarrow\mathbb{R}$ is given by the formula
\begin{equation}
    \label{eq:pif}
    \Pi^f_{x}(\varepsilon)={x}+\ln{(\varepsilon+e^{-{x}})}.
\end{equation}

It is important to observe that changes in the value of $x$, however small, induces an entirely different continuity function. Following the ideas addressed in Section \ref{sec:theory}, it is also not difficult to notice that $E_f(\mathbb{R})=\mathbb{R}\times(0,\infty)$ and that the two parameter continuity function $\Pi_f:\mathbb{R}\times(0,\infty)\rightarrow\mathbb{R}^+$ is given by
\[
    \Pi_f(x,\varepsilon)=x+\ln{(\varepsilon+e^{-x})}.
\]

By implementing the algorithm discussed in Section \ref{sec:algorithm}, we can numerically compute the two parameter continuity function and graph the $\varepsilon$--$\delta$ manifold of $f$.

\begin{example}
Rational functions.
\label{ex:rational}
\end{example}

Consider $M=\mathbb{R}\setminus\{30\}$ with the induced Euclidean metric and $\mathbb{R}$ itself with the canonical metric. Let $f:M\rightarrow \mathbb{R}$ be given by
\[
    f(y)=\dfrac{1}{y-30}.
\]

Notice that this map has a completely distinguished behavior when compared with the function discussed in Example \ref{ex:exponential}. Nevertheless, observe that for each fixed $x\in\mathbb{R}$ it holds that $E_f(x)=(0,\infty)$. Calculating the one parameter continuity function $\Pi_{x}^f:(0,\infty)\rightarrow \mathbb{R}^+$, we obtain that
\begin{equation}
\label{eq:pirationalleq}
    \Pi^f_{x}(\varepsilon)= \frac{\varepsilon(x-30)^2}{1-\varepsilon(x-30)}
\end{equation}
for $x < 30$. On the other hand, if $x>30$, then
\begin{equation}
\label{eq:pirationalgeq}
    \Pi^f_{x}(\varepsilon) = \frac{\varepsilon(x -30)^2}{1+\epsilon(x-30)}.
\end{equation}

Two interesting phenomena needs to be clarified. First, we stress that the singular behavior of this continuity function is expected, since the original mapping contains a first order pole itself. Second, it is also noteworthy that the variable change $x + x'= 60 $ allow us to obtain the right hand side of \eqref{eq:pirationalgeq} from \eqref{eq:pirationalleq}. This is a reflex of the shifted parity of $f$, i.e.
\[
    f(30 - y) + f(30 + y) = 0.
\]

Like before, following the ideas addressed in Section \ref{sec:theory}, we obtain that $E_f(\mathbb{R})=\mathbb{R}\times(0,\infty)$ and that $\Pi_f:\mathbb{R}\times(0,\infty)\rightarrow\mathbb{R}^+$ is given by the right hand side of \eqref{eq:pirationalleq} and \eqref{eq:pirationalgeq}. Precisely,
\[
    \Pi_f(x,\varepsilon)=
    \begin{cases}
        \displaystyle \frac{\varepsilon(x-30)^2}{1-\varepsilon(x-30)}, & \text{ if } x < 30
    \\[10pt]
        \displaystyle \frac{\varepsilon(x -30)^2}{1+\epsilon(x-30)}, & \text{ if } x > 30.
    \end{cases}
\]

\begin{example}
Affine functions.
\label{ex:affine}
\end{example}

As outlined in Section \ref{sec:algorithm}, the conditions in which we stated our theorems are only sufficient ones. Therefore, as can be verified in this case, non-constant affine functions are the simplest examples in which the functions are outside the specified conditions and the proposed algorithm still forges a correct answer. To analyze a concrete case, let $\mathbb{R}$ be considered with the standard Euclidean metric and  $f:\mathbb{R}\rightarrow \mathbb{R}$ be given by
\[
    f(y)=2y+1.
\]

Observe that for each fixed $x\in\mathbb{R}$ it holds that $E_f(x)=(0,\infty)$, what ensures that the one parameter continuity function $\Pi_{x}^f:(0,\infty)\rightarrow \mathbb{R}^+$ is given by
\[
    \Pi^f_{x}(\varepsilon)=\varepsilon/2.
\]

Note that the maximum $\delta$ for this function is uniformly determined with respect to $x$. This is a remarkable fact that is not shared --- in general --- by any other uniformly continuous functions. In other words, this example states that there is a theoretical gap between uniform continuity and an uniform maximal $\delta$ for each $\varepsilon$.

It is easy to see that $E_f(\mathbb{R})=\mathbb{R}\times(0,\infty)$ and that the two parameter continuity function $\Pi_f:\mathbb{R}\times(0,\infty)\rightarrow\mathbb{R}^+$ is given by
\[
    \Pi_f(x,\varepsilon)=\varepsilon/2.
\]

To conclude this paper, we exhibit a situation that is slightly different in nature when compared to previous cases: $\Pi_f$ is unknown due to the lack of tools for solving \eqref{eq:cruz_matter}.

\begin{example}
\label{ex:quadratic}
A function for which $\Pi_x^f$ is not explicitly known.
\end{example}

Consider $\mathbb{R}$ with the canonical Euclidian distance and let $f: \mathbb{R} \rightarrow \mathbb{R}$ be given by
\[
    f(y) = y^2 + 11y.
\]

This is the simplest polynomial for which $\Pi_x^f$ is yet unknown. Because we do not have an explicit continuity function in hands, we must carefully check our algorithm hypotheses to numerically find it. Initially note that if $f'(x) f''(x) = 0$, then $x = -11/2$. Hence, if we restrict  $x$ to $[-5,5]$, then $f$ satisfies the theoretical assumptions of our work. Now note that
\begin{equation}
\label{eq:trans}
f'(y) = \frac{f(y) - f(x)}{y - x} \iff 0= \frac{y^2 - 2xy + x^2}{y-x}.
\end{equation}

Therefore $f$ is of transversal type at any $x$, since for each fixed $x$ the only solution for \eqref{eq:trans} belongs to $\{y \in \mathbb{R}: y=x\}$, which is a finite set. Thus $f$ satisfies the Turing assumptions of our theory. Consequently, we may freely apply the algorithm proposed in Section \ref{sec:algorithm} to find the continuity function for $f$ at any $x \in [-5,5]$. 

A final remark is in order. In Examples \ref{ex:exponential}, \ref{ex:rational} and \ref{ex:affine} the continuity function $\Pi_x^f$ was explicitly known, and solving \eqref{eq:cruz_matter} for each case was not a hard endeavor. However, Example \ref{ex:quadratic} showed us that this is not necessarily a standard fact. As we could see, there weretrans many issues to find the continuity function formula related to $f(y)=y^2 + 11y$. The conjecture is that this difficulty is intimately connected with the absence of a bijective property.

\section*{Acknowledgements}
On the occasion of the preparation of this manuscript, the second author would like to thank the Federal University of Santa Catarina (UFSC) for the hospitality and support during a short term visit in Florian\'{o}polis. The second author has been partially supported by CAPES (process PNPD 2770/2011).

\end{document}